\title{\bf Sharp estimates for the gradient of solutions to the heat equation}
\author{\sc{Gershon Kresin$^a\!\!$}
\thanks{Corresponding author. E-mail: kresin@ariel.ac.il}$\;\;$
 and \sc{Vladimir Maz'ya$^b$}
\thanks{E-mail: vladimir.mazya@liu.se}$\;\;$ 
\\ \\
{\it{$^a$Department of Mathematics, Ariel University, Ariel 40700, Israel}}\\
{\it{$^b$Department of Mathematical Sciences, University of Liverpool,
M$\&$O Building, Liverpool,}}\\ 
{\it{L69 3BX, UK; Department of Mathematics, Link\"oping University,SE-58183 Link\"oping, }}\\
{\it{{\hskip -19mm}Sweden; }}
{\it{RUDN University, 6 Miklukho-Maklay St., Moscow, 117198, Russia}}\\
}
{ \date\ }
\numberwithin{equation}{section}
\newtheorem{lemma}{Lemma}
\newtheorem{theorem}{Theorem}
\newtheorem{proposition}[theorem]{Proposition}
\newtheorem{corollary}{Corollary}
\newenvironment{remark}{{\bf Remark}}
\newcommand{\bs}{\boldsymbol} 
\newcommand{\nl}{\lVert}
\newcommand{\nr}{\rVert}
\begin{document}
\maketitle
\large
\centerline{\sl In memory of Solomon G. Mikhlin}
\vspace{10mm}

{\bf Abstract.} Various sharp pointwise estimates for the gradient of solutions to the heat equation are obtained.
The Dirichlet and Neumann conditions are prescribed on the boundary of a half-space. 
All data belong to the Lebesgue space $L^p$. Derivation of the coefficients is based on solving 
certain optimization problems with respect to a vector parameter inside of an integral over the unit sphere. 
\\
\\
{\bf Keywords:} heat equation, sharp pointwise estimates for the gradient, 
first and second boundary value problems 
\\
\\
{\bf AMS Subject Classification:} Primary 35K05; Secondary 26D20 
\\
\section{Introduction}\label{CH_10I}

In the present paper we find the best coefficients in certain inequalities for solutions to the heat equation.
Previously results of similar nature for stationary problems were obtained in our works \cite{KM1}-\cite{KM4} and \cite{KM5}, where
solutions of the Laplace, Lam\'e and Stokes equations were considered.

In particular, in \cite{KM5} a representation for the sharp coefficient ${\mathcal A}_{n,p}(x)$ in the inequality
\begin{equation} \label{Eq_0.1}
\left |\nabla \left \{ \frac{u (x)}{x_n} \right \}\right |
\leq {\mathcal A}_{n,p}(x)\;\big|\!\big | u(\cdot, 0 )\big |\!\big |_p
\end{equation}
was derived, where $u$ is a harmonic function in the half-space ${\mathbb R}^n_+=\{ x=(x', x_n): x' \in {\mathbb R}^{n-1}, x_n >0 \}$, 
represented by the Poisson integral with boundary values in $L^p({\mathbb R}^{n-1})$, $||\cdot ||_p$ is the norm 
in $L^p({\mathbb R}^{n-1})$, $1 \leq p \leq \infty$. It was shown that
$$
{\mathcal A}_{n,p}(x)=\frac{A_{n,p}}{ x_{n}^{2+(n-1)/p}}, 
$$
where
$$
A_{n,p}=\frac{2 n}{\omega_n}\left \{\frac{\pi^{\frac{n-1}{2}}\Gamma\left ( \frac{3p+n-1}{2(p-1)} \right )}
{\Gamma\left ( \frac{(n +2)p}{2(p-1)} \right )} \right \}^{1-\frac{1}{p}}
$$
for $1< p< \infty$, and $A_{n,1}=2n/\omega_n$, $A_{n,\infty}=1$. Here and henceforth we denote by $\omega_n$ the area of the unit sphere
${\mathbb S}^{n-1}$ in ${\mathbb R}^n$.

Another sharp estimate for the modulus of the gradient of harmonic functions in ${\mathbb R}^{n}_+$ was obtained 
in \cite{KM2}: 
\begin{equation} \label{Eq_0.2}
|\nabla u(x)|\leq {\mathcal N}_{n,p}(x) \left |\left | \frac{\partial u}{\partial \bs\nu} 
\right |\right |_p,
\end{equation}
where $\bs \nu$ is the unit normal vector to $\partial {\mathbb R}^n_+$, $p \in [1, n]$, $x\in {\mathbb R}^n_+$. 
The best value of the coefficient in (\ref{Eq_0.2}) is given by
$$
{\mathcal N}_{n,p}(x)=\frac{N_{n,p}}{x_n^{(n-1)/p}}\;,
$$
where
$$
N_{n,p}=\frac{2^{1/p}}{\omega_n}\left \{
  \frac{2\pi ^{(n-1)/2}\Gamma \left (\frac{n+p-1}{2p-2} \right )}{\Gamma\left (\frac{np}{2p-2} \right )} 
   \right \}^{1-\frac{1}{p} }
$$
for $1<p \leq n$, and $N_{n,1}=1/\omega_n$.

The plan of the present paper is as follows. Section 2 is auxiliary. It is devoted to a certain optimization problem with respect
to vector parameter inside of the integral over the unit sphere of ${\mathbb R}^{n}$. In the next sections we study solutions
to the heat equation. The boundary value problem
$$
\frac{\partial u}{\partial t}=a^2\Delta u\;\;{\rm in}\;\;{\mathbb R}^{n}_+\times (0, +\infty ),\;\;u\big |_{t=0}=0,\;\;u\big |_{x_n=0}=f(x', t)
$$
is considered in Section 3. Here $f\in L^p\big ({\mathbb R}^{n-1}\times (0, +\infty ) \big )$, $1\leq p\leq \infty$, and the solution 
$u$ is represented by the heat double layer potential.
The norm in the space $L^p\big ({\mathbb R}^{n-1} \times (0, t ) \big )$ is defined by 
\begin{eqnarray} \label{Eq_0.3}
\nl f \nr_{p, t}=\left\{\begin{array}{lll}
\displaystyle{\left \{ \int_0^t\int _ {{\mathbb R}^{n-1} } | f(x', \tau ))|^p dx' d\tau\right \}^{1/p}}
\;&\quad\;\;\;{\rm for}\;\;1\leq  p< \infty\;, \\
        \\
\displaystyle{\mbox{ess}\;\sup \{ | f(x', \tau) |:x' \in {\mathbb R}^{n-1},\; \tau \in (0, t) \}}\;&\quad\quad{\rm for}\;\;p=\infty\;.
\end{array}\right .
\end{eqnarray}
The main result obtained in Section 3 is the inequality
$$
\left | \nabla_x \left \{ \frac{u(x, t)}{x_n} \right \}\right |\leq {\mathcal W}_p(x, t) ||f||_{p, t}
$$
with the best coefficient 
\begin{equation} \label{Eq_0.5}
{\mathcal W}_p(x,t)=\frac{c_{n,p}}{x_n^{2+\frac{n+1}{p}}}
\max_{|\bs z|=1}\left \{ \int_{{\mathbb S}^{n-1}}\omega_{\kappa , \lambda} \big ( ( \bs e_{\sigma}, \bs e_n )\big )
|( \bs e_{\sigma}, \bs e_n )|^{\frac{n+p+2}{p-1}}|( \bs e_{\sigma}, \bs z )|^{\frac{p}{p-1}} d\sigma\right \}^{\frac{p-1}{p}}\!,
\end{equation}
where $(x, t)$ is an arbitrary point in ${\mathbb R}^{n}_+\times (0, +\infty )$,
\begin{equation} \label{Eq_0.4}
\omega_{\kappa, \lambda}(u)=\int_{\kappa/u^2}^\infty \xi^\lambda e^{-\xi}d\xi\\;,
\end{equation}
and
$$
c_{n,p}=\frac{2^{\frac{1}{p}}(4a^2)^{1+\frac{1}{p}}}{\pi^{\frac{n}{2}-1}q^{\frac{n}{2}+1+\frac{1}{p}}}\;,\;\;\;\;
\kappa=\frac{q x_n^2}{4a^2 t}\;,\;\;\;\;\;\lambda=\frac{(n+4)q}{2}-2
$$
with $p^{-1}+q^{-1}=1$. 

The extremal problem in (\ref{Eq_0.5}) is solved for the case $2\leq p\leq \infty$ and the explicit formula 
$$
{\mathcal W}_p(x, t)=\frac{c_{n,p}}{x_n^{2+\frac{n+1}{p}}}\left \{ \!2\omega_{n-1}\int_0^{\pi/2} 
\left \{\int_{\frac{q x_n^2}{4a^2 t\cos^2\vartheta}}^\infty \xi^{\frac{np+4}{2(p-1)}}e^{-\xi}d\xi \right \}
 \cos ^{\frac{n+2(p+1)}{p-1}}\vartheta 
\sin^{n-2}\vartheta d\vartheta\!\right \}^{\frac{p-1}{p}}
$$
is obtained. In particular, 
$$
{\mathcal W}_\infty(x, t)=\frac{16a^2\sqrt{\pi}}{\Gamma \left (\frac{n-1}{2} \right )x_n^2}\int_0^{\pi/2} 
\left \{\int_{ \frac{x^2_n}{4a^2 t \cos^2\vartheta}}^\infty \xi^{n/2} e^{-\xi}d\xi \right \}
\cos ^{2}\vartheta \sin^{n-2}\vartheta d\vartheta\;.
$$

In Section \ref{S_4} we obtain an analog of (\ref{Eq_0.2}) for solutions of the Neumann problem 
$$
\frac{\partial u}{\partial t}=a^2\Delta u\;\;{\rm in}\;\;{\mathbb R}^n_+\times (0, +\infty),\;\;u\big |_{t=0}=0,\;\;
\frac{\partial u}{\partial x_n}\Big |_{x_n=0}=g(x', t)
$$
with $g\in L^p\big ({\mathbb R}^{n-1}\times (0, +\infty ) \big )$, represented by the heat single layer potential, $1\leq p \leq \infty$.

It is shown that for an arbitrary point $(x, t) \in {\mathbb R}^{n}_+\times (0, +\infty )$, the sharp coefficient ${\mathcal N}_p(x, t)$ in the inequality 
$$
|\nabla_x u(x, t)|\leq {\mathcal N}_p(x, t) ||g||_{p, t}
$$
is given by
\begin{equation} \label{Eq_0.6}
{\mathcal N}_p(x, t)=\frac{k_{n,p}}{x_n^{\frac{n+1}{p}}}\max_{|\bs z|=1}
\left \{ \int_{{\mathbb S}^{n-1}}\omega_{\kappa , \lambda} \big ( ( \bs e_{\sigma}, \bs e_n )\big )
|( \bs e_{\sigma}, \bs e_n )|^{\frac{n-p+2}{p-1}}|( \bs e_{\sigma}, \bs z )|^{\frac{p}{p\!-\!1}} d\sigma\right \}^{\frac{p-1}{p}}\!,
\end{equation}
where $\omega_{\kappa , \lambda}(u)$ is the same as in (\ref{Eq_0.4}), and
$$
k_{n,p}=\frac{2^{(3-p)/p} a^{2/p}}{\pi^{n/2}q^{\frac{n}{2}+\frac{1}{p}}}\;,\;\;\;\;\;\kappa=\frac{q x_n^2}{4a^2 t}\;,\;\;\;\;\;
\lambda=\frac{(n+2)q}{2}-2\;.
$$

The extremal problem in (\ref{Eq_0.6}) is solved for the case $2\leq p\leq (n+4)/2$ and the explicit formula 
$$
{\mathcal N}_p(x, t)=\frac{k_{n,p}}{x_n^{\frac{n+1}{p}}}
\left \{ 2\omega_{n-1}\!\! \int_0^{\pi/2} \!\!
\left \{\int_{\frac{q x_n^2}{4a^2 t \cos^2\vartheta}}^\infty \xi^{\frac{(n-2)p+4}{2(p-1)}}e^{-\xi}d\xi \right \}
\cos ^{\frac{n+2}{p-1}}\vartheta 
\sin^{n-2}\vartheta d\vartheta\right \}^{\frac{p\!-\!1}{p}}
$$
is obtained. In particular,
$$
{\mathcal N}_2(x, t)=\frac{b_{n}}{x_n^{\frac{n+1}{2}}}\left \{\int_0^{\pi/2} 
\left \{\int_{ \frac{x^2_n}{2a^2 t \cos^2\vartheta}}^\infty \xi^{n} e^{-\xi}d\xi \right \}
\cos ^{n+2}\vartheta \sin^{n-2}\vartheta d\vartheta \right \}^{1/2}\;,
$$
where
$$
b_{n}=\frac{a}{2^{\frac{n-1}{2}}\pi^{\frac{n+1}{4}}\sqrt{\Gamma \left (\frac{n-1}{2} \right )}}\;.
$$

\section{Extremal problems for integrals with parameters} \label{S_2}

\subsection{Extremal problem for integrals with parameter on the space with measure}\label{abstr_param}

Let $X$ is the space with $\sigma$-finite measure $\mu$ defined on the
$\sigma$-algebra ${\mathfrak S}$ of measurable sets, parameters $y$ and $y_{_0}$ are elements of a set $Y$,
$\rho (x; y)$ and $f(x; y)$ are $[0, +\infty ]$-valued ${\mathfrak S}$-measurable functions on $X$ 
for any fixed $y\in Y$. 

A particular case of the assertion below with $\rho\equiv 1$ and somewhat weaker assumption was proved in \cite{K}.

\begin{proposition} \label{P_1} Let $y_{_0}$ be a fixed point of $Y$. 
Let $\gamma \in (0, +\infty)$ and let the integral
\begin{equation} \label{Eq_1.1}  
\int_X \rho (x; y_{_0})f^\gamma(x; y) d\mu
\end{equation}
attains its supremum on $y\in Y$ at the point $y_{_0}\in Y$ (the case of $+\infty$ is not excluded). Further on, let
\begin{equation} \label{Eq_1.2} 
{\mathcal I}(y, y_{_0})=\int_X \rho (x; y_{_0})f^{\alpha }(x; y)f^{\beta}(x; y_{_0})d\mu\;,
\end{equation}
where $\alpha >0, \beta \geq 0$. 

Then the equality holds
\begin{equation} \label{Eq_1.3} 
\sup_{y\in Y}{\mathcal I}(y, y_{_0})={\mathcal I}(y_{_0}, y_{_0})=
\int_X \rho (x; y_{_0})f^\gamma (x; y_{_0})d\mu
\end{equation}
for any $\alpha$ and $\beta $ such that $\alpha+\beta=\gamma$.

In particular, the supremum of ${\mathcal I}(y, y_{_0})$ over $y\in Y$ is independent of $y_{_0}$ if the value 
of integral 
$$
\int_X \rho (x; y)f^\gamma (x; y) d\mu
$$
does not depend on $y$.
\end{proposition}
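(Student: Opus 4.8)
The plan is to exploit the hypothesis that the integral \eqref{Eq_1.1} attains its supremum over $y\in Y$ precisely at the fixed point $y_{_0}$. The key observation is that $\mathcal{I}(y,y_{_0})$ as defined in \eqref{Eq_1.2} has the form $\int_X \rho(x;y_{_0})\, f^{\alpha}(x;y)\, f^{\beta}(x;y_{_0})\, d\mu$, and when $\alpha+\beta=\gamma$ the factor $f^{\beta}(x;y_{_0})$ can be absorbed together with $\rho(x;y_{_0})$ into a new ``weight'' depending only on $y_{_0}$. That is, I would set $\tilde\rho(x) := \rho(x;y_{_0})\, f^{\beta}(x;y_{_0})$, which is a fixed $[0,+\infty]$-valued $\mathfrak S$-measurable function (here $\beta\ge 0$ guarantees $f^{\beta}$ is well defined as a $[0,+\infty]$-valued function, with the usual convention on $0^{0}$ if $\beta=0$, matching the $\rho\equiv1$ reduction), and rewrite
\[
\mathcal{I}(y,y_{_0})=\int_X \tilde\rho(x)\, f^{\alpha+\beta}(x;y)\, d\mu = \int_X \tilde\rho(x)\, f^{\gamma}(x;y)\, d\mu.
\]

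First I would note that applying the hypothesis of the proposition is legitimate only after checking that it can be invoked with the weight $\tilde\rho$ in place of $\rho(\cdot;y_{_0})$; but in fact the hypothesis is stated for the specific weight $\rho(\cdot;y_{_0})$, so I need to be a little more careful. The clean route is: the hypothesis says $\sup_{y\in Y}\int_X \rho(x;y_{_0})\, f^{\gamma}(x;y)\, d\mu = \int_X \rho(x;y_{_0})\, f^{\gamma}(x;y_{_0})\, d\mu$. I want the analogous statement with the extra factor $f^{\beta}(x;y_{_0})$ inserted. Since that extra factor is nonnegative and, crucially, does \emph{not} depend on $y$, monotonicity of the integral gives immediately that for every $y\in Y$,
\[
\int_X \rho(x;y_{_0})\, f^{\beta}(x;y_{_0})\, f^{\alpha}(x;y)\, d\mu \;\le\; \int_X \rho(x;y_{_0})\, f^{\beta}(x;y_{_0})\, \Big(\sup_{y\in Y} f^{\alpha}(x;y)\Big)\, d\mu,
\]
but this pointwise-supremum bound is too crude — it is not what the hypothesis gives. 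So instead I would argue directly: the hypothesis, being an \emph{integral} statement, must be applied to the combined weight. The correct reading is that Proposition~\ref{P_1} is really a statement about the functional $y\mapsto \int_X w(x)\, f^{\gamma}(x;y)\, d\mu$ for an arbitrary fixed nonnegative weight $w$, asserting it is maximized at $y_{_0}$ whenever the special case $w=\rho(\cdot;y_{_0})$ is; and this holds because (in the intended application, cf.\ \cite{K}) the maximality at $y_{_0}$ comes from a pointwise inequality $f(x;y)\le f(x;y_{_0})$ holding $\mu$-a.e., or more generally from a structure preserved under multiplication by any fixed nonnegative $w$. I would therefore state and use the lemma in the form: the maximality hypothesis for weight $\rho(\cdot;y_{_0})$ propagates to weight $\rho(\cdot;y_{_0})f^{\beta}(\cdot;y_{_0})$, which is the only genuine content needed.

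Once that is in hand, the proof of \eqref{Eq_1.3} is a one-line consequence: with $\alpha+\beta=\gamma$,
\[
\sup_{y\in Y}\mathcal{I}(y,y_{_0}) = \sup_{y\in Y}\int_X \rho(x;y_{_0})\, f^{\gamma}(x;y)\, f^{\beta}(x;y_{_0})\, d\mu = \int_X \rho(x;y_{_0})\, f^{\gamma}(x;y_{_0})\, d\mu = \mathcal{I}(y_{_0},y_{_0}),
\]
where the middle equality is the (propagated) maximality hypothesis and the last is just $\alpha+\beta=\gamma$ evaluated at $y=y_{_0}$. For the final ``in particular'' clause, I would observe that if $y\mapsto \int_X \rho(x;y)\, f^{\gamma}(x;y)\, d\mu$ is constant in $y$, then in particular its value at any $y$ equals its value at $y_{_0}$, so the right-hand side of \eqref{Eq_1.3} is independent of the choice of $y_{_0}$, hence so is $\sup_{y\in Y}\mathcal{I}(y,y_{_0})$. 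The main obstacle, and the only place requiring real care, is the first step: making precise \emph{why} the maximality at $y_{_0}$ for the weight $\rho(\cdot;y_{_0})$ transfers to the weight $\rho(\cdot;y_{_0})f^{\beta}(\cdot;y_{_0})$ — i.e.\ identifying the hypothesis as one about a pointwise (a.e.) domination of $f(\cdot;y)$ by $f(\cdot;y_{_0})$ rather than merely about the value of one integral — after which everything else is bookkeeping with $\alpha+\beta=\gamma$ and monotone integration of $[0,+\infty]$-valued functions.
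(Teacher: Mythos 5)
There is a genuine gap, and you in fact identify it yourself near the end of your write-up — but you then try to paper over it by \emph{strengthening} the hypothesis, which is not legitimate, and you miss the one idea that actually makes the proposition true as stated.

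First, a small algebraic slip: after setting $\tilde\rho(x)=\rho(x;y_{_0})f^{\beta}(x;y_{_0})$ you write $\mathcal I(y,y_{_0})=\int_X\tilde\rho(x)f^{\alpha+\beta}(x;y)\,d\mu$, but the $f^{\beta}$ factor in $\mathcal I$ is evaluated at $y_{_0}$, not $y$; once it is absorbed into $\tilde\rho$ what remains is $\int_X\tilde\rho(x)f^{\alpha}(x;y)\,d\mu$. More importantly, you then observe — correctly — that the hypothesis of the proposition is a statement about the single weight $\rho(\cdot;y_{_0})$, and that you have no way to transfer the maximality at $y_{_0}$ to the different weight $\tilde\rho$. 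Your proposed repair, ``identifying the hypothesis as one about a pointwise (a.e.) domination of $f(\cdot;y)$ by $f(\cdot;y_{_0})$,'' is a change of hypothesis, not a proof step: the proposition assumes only that the \emph{integral} \eqref{Eq_1.1} attains its supremum at $y_{_0}$, which is strictly weaker than $f(x;y)\le f(x;y_{_0})$ $\mu$-a.e.\ for all $y$. Under the proposition's actual hypothesis the weight-transfer step simply does not follow, so your argument proves a weaker result than the one claimed.

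The proposition nevertheless is true as stated, and the missing idea is H\"older's inequality. For $\beta>0$, split the weight as $\rho=\rho^{\alpha/\gamma}\rho^{\beta/\gamma}$ and group the integrand as
\[
\mathcal I(y,y_{_0})=\int_X\Bigl(\rho^{\alpha/\gamma}(x;y_{_0})f^{\alpha}(x;y)\Bigr)\Bigl(\rho^{\beta/\gamma}(x;y_{_0})f^{\beta}(x;y_{_0})\Bigr)d\mu .
\]
Applying H\"older with conjugate exponents $\gamma/\alpha$ and $\gamma/\beta$ gives
\[
\mathcal I(y,y_{_0})\le\left\{\int_X\rho(x;y_{_0})f^{\gamma}(x;y)\,d\mu\right\}^{\alpha/\gamma}\left\{\int_X\rho(x;y_{_0})f^{\gamma}(x;y_{_0})\,d\mu\right\}^{\beta/\gamma},
\]
and now the integral hypothesis applies \emph{directly} to the first factor (same weight $\rho(\cdot;y_{_0})$, same exponent $\gamma$), bounding it by $\{\int_X\rho f^{\gamma}(\cdot;y_{_0})d\mu\}^{\alpha/\gamma}$. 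Multiplying the two factors gives $\mathcal I(y,y_{_0})\le\int_X\rho(x;y_{_0})f^{\gamma}(x;y_{_0})d\mu=\mathcal I(y_{_0},y_{_0})$, and the reverse inequality is trivial by taking $y=y_{_0}$. The ``in particular'' clause then follows exactly as you argue. So the overall skeleton of your proposal (reduce to the hypothesis, then evaluate at $y=y_{_0}$) is right, but the bridge between $\mathcal I(y,y_{_0})$ and the hypothesis must be H\"older, not a weight change backed by an unstated pointwise assumption.
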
  
\begin{proof} Let $\alpha >0$ and $\beta \geq 0$ are arbitrary numbers, $\alpha+\beta=\gamma$. 
The case $\beta=0$ is obvious. Now, let $\beta >0$. By H\"older's inequality, the integral
\begin{eqnarray*} 
{\mathcal I}(y, y_{_0})&=&\int_X\! \rho (x; y_{_0})f^{\alpha}(x; y)f^{\beta}(x; y_{_0})d\mu\\
& &\\
&=&\int_X\! \left (\rho^{\frac{\alpha}{\gamma}}(x; y_{_0})f^{\alpha}(x; y)\right )
\left (\rho^{\frac{\beta}{\gamma}}(x; y_{_0})f^{\beta}
(x; y_{_0})\right )d\mu
\end{eqnarray*}
does not exceed the product
$$
\left \{\int_X \rho^{\frac{\alpha}{\gamma}\frac{\gamma}{\alpha}}(x; y_{_0})f^{{\alpha }
\frac{\gamma}{\alpha}}(x; y) d\mu \right \}^{\frac{\alpha }{\gamma}}
\left \{\int_X \rho^{\frac{\beta}{\gamma}\frac{\gamma}{\beta}}(x; y_{_0})
f^{{\beta}\frac{\gamma}{\beta}}(x; y_{_0}) d\mu \right \}^{\frac{\beta}{\gamma}}.
$$
Since integral (\ref{Eq_1.1}) attains its supremum on $y\in Y$ at $y_{_0}$, it follows that
\begin{equation} \label{Eq_1.4}
\sup_{y\in Y}{\mathcal I}(y, y_{_0})\leq \int_X \rho(x; y_{_0})f^{\gamma} (x; y_{_0})d\mu\;.
\end{equation}

On the other hand, by (\ref{Eq_1.2}) we have
$$
\sup_{y\in Y}{\mathcal I}(y, y_{_0})\geq {\mathcal I}(y_{_0}, y_{_0})=
\int_X\! \rho(x; y_{_0})f^{\gamma}(x; y_{_0})d\mu \;,
$$
which together with (\ref{Eq_1.4}) completes the proof.
\end{proof}

\subsection{Extremal problem for integral over ${\mathbb S}^{n-1}$}\label{unit_sphere}

Let $\bs e_\sigma$ be the $n$-dimensional unit vector joining the origin to a point $\sigma \in {\mathbb S}^{n-1}$. 
We denote by $\bs e$ and $\bs z$ the $n$-dimensional unit vectors and assume that $\bs e$ is a fixed vector.
Let $\rho $ and $f$ be non-negative Lebesgue measurable functions in $[-1, 1]$. 

The next assertion is an immediate consequence of Proposition \ref{P_1}.
\begin{corollary} \label{C_1} Let $\gamma >0$ and let the integral
\begin{equation} \label{Eq_1.5}
\int_{{\mathbb S}^{n-1}} \rho\big ((\bs e_\sigma, \bs e) \big )f^{\gamma}\big ( (\bs e_\sigma, \bs z) \big )d\sigma
\end{equation}
attains its supremum on $\bs z\in {\mathbb R}^n$, $|\bs z|=1$ at the vector $\bs e$. Further, let $\alpha \geq 0, \beta > 0$ and $\alpha+\beta=\gamma$. 
Then
\begin{align} \label{Eq_1.6}
&\sup _{|\bs z|=1}\int_{{\mathbb S}^{n-1}}\rho\big ((\bs e_\sigma, \bs e) \big )f^{\alpha }\big ((\bs e_\sigma, \bs e)\big )f^{\beta }\big ((\bs e_\sigma, \bs z)\big )d\sigma\nonumber\\
&=\int_{{\mathbb S}^{n-1}}\rho\big ((\bs e_\sigma, \bs e) \big )f^{\gamma}
\big ((\bs e_\sigma, \bs e) \big )d\sigma .
\end{align}
\end{corollary}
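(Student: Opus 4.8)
The plan is to derive Corollary~\ref{C_1} as a direct specialization of Proposition~\ref{P_1}. First I would set up the abstract framework: take $X = {\mathbb S}^{n-1}$ with $\mu = \sigma$ the surface measure, which is finite and hence $\sigma$-finite, and let the parameter set be $Y = \{\bs z \in {\mathbb R}^n : |\bs z| = 1\}$. The distinguished point is $y_{_0} = \bs e$, the fixed unit vector in the hypothesis. I would then define the two families of $[0,+\infty]$-valued measurable functions on $X$ by $\rho(x; y) := \rho\big((\bs e_\sigma, \bs e)\big)$ (independent of $y$, which is allowed) and $f(x; y) := f\big((\bs e_\sigma, \bs z)\big)$, where $\sigma$ is the point of ${\mathbb S}^{n-1}$ corresponding to $x$ and $\bs z$ corresponds to $y$. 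Both are measurable since $\rho$ and $f$ are Lebesgue measurable on $[-1,1]$ and $\bs z \mapsto (\bs e_\sigma, \bs z)$ is continuous.

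With this dictionary, the hypothesis of Corollary~\ref{C_1}---that the integral \eqref{Eq_1.5} attains its supremum over $|\bs z|=1$ at $\bs z = \bs e$---is exactly the hypothesis of Proposition~\ref{P_1} that the integral \eqref{Eq_1.1}, namely $\int_X \rho(x; y_{_0}) f^\gamma(x; y)\, d\mu$, attains its supremum over $y \in Y$ at $y_{_0}$. Next I would check the index conditions: Corollary~\ref{C_1} assumes $\alpha \geq 0$, $\beta > 0$, and $\alpha + \beta = \gamma$, whereas Proposition~\ref{P_1} is stated for $\alpha > 0$, $\beta \geq 0$, $\alpha+\beta=\gamma$. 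These roles are symmetric in the sense that Proposition~\ref{P_1} applies verbatim after interchanging the names $\alpha$ and $\beta$: apply it with $\alpha' = \beta > 0$ playing the role of ``$\alpha$'' and $\beta' = \alpha \geq 0$ playing the role of ``$\beta$,'' so that the integrand $f^{\alpha'}(x;y) f^{\beta'}(x;y_{_0}) = f^{\beta}\big((\bs e_\sigma,\bs z)\big) f^{\alpha}\big((\bs e_\sigma,\bs e)\big)$ coincides with the integrand in \eqref{Eq_1.6}. Then \eqref{Eq_1.3} gives
\[
\sup_{|\bs z|=1} \int_{{\mathbb S}^{n-1}} \rho\big((\bs e_\sigma,\bs e)\big) f^{\alpha}\big((\bs e_\sigma,\bs e)\big) f^{\beta}\big((\bs e_\sigma,\bs z)\big)\, d\sigma = \int_{{\mathbb S}^{n-1}} \rho\big((\bs e_\sigma,\bs e)\big) f^{\gamma}\big((\bs e_\sigma,\bs e)\big)\, d\sigma,
\]
which is precisely \eqref{Eq_1.6}.

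There is essentially no obstacle here; the only point requiring a word of care is the mismatch in which exponent is allowed to vanish. In Proposition~\ref{P_1} the case ``$\beta = 0$'' is the trivial one (the integral ${\mathcal I}$ reduces to \eqref{Eq_1.1} at $y = y_{_0}$, handled instantly), and ``$\beta > 0$'' is where H\"older is used; after the relabeling described above, the corollary's case $\alpha = 0$ becomes that trivial case and $\alpha > 0$ becomes the H\"older case, so nothing new must be proved. The hypotheses that $\rho, f \geq 0$ ensure all powers $\rho^{\alpha/\gamma}$, $f^{\alpha}$, etc., are well defined as $[0,+\infty]$-valued functions, so the measure-theoretic assumptions of Proposition~\ref{P_1} are met. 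Hence the corollary follows immediately, and I would present the proof as a single short paragraph citing Proposition~\ref{P_1} with the substitution $X = {\mathbb S}^{n-1}$, $\mu = \sigma$, $y_{_0} = \bs e$, and the exponents exchanged.
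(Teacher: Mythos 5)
Your proof is correct and follows exactly the route the paper intends: the paper states Corollary~\ref{C_1} as an ``immediate consequence'' of Proposition~\ref{P_1}, and you spell out the required dictionary $X={\mathbb S}^{n-1}$, $\mu=\sigma$, $Y=\{|\bs z|=1\}$, $y_{_0}=\bs e$ together with the necessary relabeling of $\alpha$ and $\beta$ to reconcile the strict/non-strict inequalities. Nothing is missing.
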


\begin{remark} By the equality
$$
\int_{{\mathbb S}^{n-1}} F\big ((\bs e_\sigma, \bs e) \big )d\sigma
=\omega_{n-1}\int_0^\pi F\big ( \cos \vartheta \big )\sin ^{n-2}\vartheta d\vartheta , 
$$
we conclude that value of the integral in the right-hand side of (\ref{Eq_1.6}) is independent of $\bs e$. 
In the case of the even function $F$, the last equality can be written as 
\begin{equation} \label{Eq_1.6A}
\int_{{\mathbb S}^{n-1}} F\big ((\bs e_\sigma, \bs e) \big )d\sigma
=2\omega_{n-1}\int_0^{\pi/2} F\big ( \cos \vartheta \big )\sin ^{n-2}\vartheta d\vartheta . 
\end{equation}
\end{remark}

\bigskip
Further, we consider a special case of Corollary \ref{C_1} with $\gamma=2$, 
\begin{equation} \label{Eq_1.6R}
\rho_{\kappa, \lambda, \mu}(u)=\omega_{\kappa, \lambda}(u)|u|^\mu,\;\;\;\;\;\;f(u)=|u|,
\end{equation}
where $\kappa, \lambda , \mu \geq 0$ and 
\begin{equation} \label{Eq_1.6AB}
\omega_{\kappa, \lambda}(u)=\int_{\kappa/u^2}^\infty \xi^\lambda e^{-\xi}d\xi=\Gamma \left (\lambda +1,\;\frac{\kappa}{u^2} \right )\;.
\end{equation}
Here by
\begin{equation} \label{Eq_1.6ABC}
\Gamma(\alpha, x)=\int_x^\infty \xi^{\alpha-1}e^{-\xi}d\xi
\end{equation}
is denoted the additional incomplete Gamma-function.

\begin{lemma} \label{L_1} Let  
\begin{equation} \label{Eq_1.8}
F_{\kappa, \lambda, \mu, \nu}(\bs z)=
\int_{{\mathbb S}^{n-1}} \omega_{\kappa, \lambda}\big ((\bs e_\sigma, \bs e)\big )\big |(\bs e_\sigma, \bs e)|^{\mu+\nu}  
(\bs e_\sigma, \bs z) ^{2-\nu} d\sigma\;.
\end{equation}

Then for any $\kappa, \lambda, \mu \geq 0$, $0\leq \nu <2$, the equality
\begin{equation} \label{Eq_1.9}
\max _{|\bs z|=1}F_{\kappa, \lambda, \mu, \nu}(\bs z)=F_{\kappa, \lambda, \mu, \nu}(\bs e)=
\int_{{\mathbb S}^{n-1}} \omega_{\kappa, \lambda}\big ((\bs e_\sigma, \bs e)\big ) 
\big |(\bs e_\sigma, \bs e)|^{\mu +2} d\sigma
\end{equation}
holds.
\end{lemma}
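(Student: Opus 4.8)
The plan is to recognize Lemma~\ref{L_1} as a direct specialization of Corollary~\ref{C_1}. First I would set, following the notation introduced in~(\ref{Eq_1.6R})--(\ref{Eq_1.6AB}), the density $\rho(u)=\rho_{\kappa,\lambda,\mu}(u)=\omega_{\kappa,\lambda}(u)\,|u|^\mu$ and $f(u)=|u|$, and choose $\gamma=2$. With these choices the integral in~(\ref{Eq_1.5}) becomes
$$
\int_{{\mathbb S}^{n-1}}\omega_{\kappa,\lambda}\big((\bs e_\sigma,\bs e)\big)\,|(\bs e_\sigma,\bs e)|^{\mu}\,|(\bs e_\sigma,\bs z)|^{2}\,d\sigma
=\int_{{\mathbb S}^{n-1}}\omega_{\kappa,\lambda}\big((\bs e_\sigma,\bs e)\big)\,|(\bs e_\sigma,\bs e)|^{\mu}\,(\bs e_\sigma,\bs z)^{2}\,d\sigma,
$$
which is exactly $F_{\kappa,\lambda,\mu,\nu}(\bs z)$ evaluated at $\nu=0$; in general $F_{\kappa,\lambda,\mu,\nu}$ corresponds to the bilinear split with exponents $\alpha=2-\nu$ on the $\bs z$-factor and the remaining $\mu+\nu$ absorbed appropriately — so I would set $\alpha=2-\nu$ and $\beta=\nu$, noting $\alpha+\beta=2=\gamma$, and that the constraint $0\le\nu<2$ is precisely $\alpha>0$, $\beta\ge 0$. (A small bookkeeping point: Corollary~\ref{C_1} is stated with $\alpha\ge 0,\beta>0$, but the roles of the two factors are symmetric up to relabeling, and the $\beta=0$ case is trivial as already observed in the proof of Proposition~\ref{P_1}, so the hypothesis $0\le\nu<2$ is the correct one.)

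The one genuine thing to verify is the hypothesis of Corollary~\ref{C_1}: that the integral~(\ref{Eq_1.5}) with the above $\rho$, $f$, $\gamma=2$ attains its supremum over unit vectors $\bs z$ at $\bs z=\bs e$. That is, I must show
$$
\sup_{|\bs z|=1}\int_{{\mathbb S}^{n-1}}\omega_{\kappa,\lambda}\big((\bs e_\sigma,\bs e)\big)\,|(\bs e_\sigma,\bs e)|^{\mu}\,(\bs e_\sigma,\bs z)^{2}\,d\sigma
$$
is achieved at $\bs z=\bs e$. This is the step I expect to be the main (indeed the only) obstacle, and the natural approach is a symmetry/eigenvector argument: the quantity is $\bs z^{\top}M\bs z$ where $M=\int_{{\mathbb S}^{n-1}}\omega_{\kappa,\lambda}\big((\bs e_\sigma,\bs e)\big)\,|(\bs e_\sigma,\bs e)|^{\mu}\,\bs e_\sigma\bs e_\sigma^{\top}\,d\sigma$ is a symmetric positive semidefinite matrix. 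By rotational invariance about the axis $\bs e$, $M$ commutes with every rotation fixing $\bs e$, hence $M=a\,\bs e\bs e^{\top}+b\,(I-\bs e\bs e^{\top})$ for scalars $a,b\ge 0$; the maximum of $\bs z^{\top}M\bs z$ over the unit sphere is $\max(a,b)$, attained at $\bs z=\bs e$ whenever $a\ge b$. So it remains to check $a\ge b$, i.e. (using~(\ref{Eq_1.6A}) to reduce to a one-dimensional integral in $\vartheta$ with $t=\cos\vartheta$)
$$
\int_{-1}^{1}\omega_{\kappa,\lambda}(t)\,|t|^{\mu}\,t^{2}\,(1-t^{2})^{(n-3)/2}\,dt
\;\ge\;
\frac{1}{n-1}\int_{-1}^{1}\omega_{\kappa,\lambda}(t)\,|t|^{\mu}\,(1-t^{2})\,(1-t^{2})^{(n-3)/2}\,dt,
$$
where the factor $1/(n-1)$ comes from averaging $(\bs e_\sigma,\bs z)^2$ over the $(n-1)$-dimensional orthogonal complement of $\bs e$. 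Equivalently, writing $w(t)=\omega_{\kappa,\lambda}(t)\,|t|^{\mu}\,(1-t^{2})^{(n-3)/2}\ge 0$, one needs $n\int_{-1}^1 t^2 w\,dt\ge \int_{-1}^1 w\,dt$, i.e. the average of $t^2$ against the weight $w$ is at least $1/n$.

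To finish, I would establish this last inequality by exploiting that $\omega_{\kappa,\lambda}(t)=\Gamma\!\big(\lambda+1,\kappa/t^2\big)$ is a nondecreasing function of $|t|$ on $[-1,1]$ (since $t\mapsto\kappa/t^2$ is decreasing in $|t|$ and $\Gamma(\lambda+1,\cdot)$ is decreasing), so $w(t)/(1-t^2)^{(n-3)/2}$ puts relatively more mass near $|t|=1$ than the bare measure $(1-t^2)^{(n-3)/2}dt$ on the sphere would; a correlation (Chebyshev) inequality then gives that the $t^2$-average against $w$ dominates the $t^2$-average against $(1-t^2)^{(n-3)/2}dt$, and the latter equals exactly $1/n$ by the standard moment computation $\int_{{\mathbb S}^{n-1}}(\bs e_\sigma,\bs e)^2 d\sigma = \omega_n/n$. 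Once the hypothesis of Corollary~\ref{C_1} is confirmed, the conclusion~(\ref{Eq_1.9}) is immediate: apply~(\ref{Eq_1.6}) with $\alpha=2-\nu$, $\beta=\nu$, $\gamma=2$ to obtain $\max_{|\bs z|=1}F_{\kappa,\lambda,\mu,\nu}(\bs z)=\int_{{\mathbb S}^{n-1}}\omega_{\kappa,\lambda}\big((\bs e_\sigma,\bs e)\big)|(\bs e_\sigma,\bs e)|^{\mu+2}\,d\sigma$, and the first equality $F_{\kappa,\lambda,\mu,\nu}(\bs e)$ just means the max is attained at $\bs z=\bs e$, which is built into Corollary~\ref{C_1}'s conclusion together with the hypothesis we verified.
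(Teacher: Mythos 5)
Your proposal is correct, and for the heart of the argument it takes a genuinely different route from the paper's. The reduction to Corollary~\ref{C_1} with $\gamma=2$, $\rho=\rho_{\kappa,\lambda,\mu}$, $f(u)=|u|$ is exactly what the paper does in part~(ii) of its proof; but note that the ``bookkeeping point'' you flag is in fact a non-issue and your relabeling remark is not the right fix (the roles of the $\bs e$-factor and $\bs z$-factor in Corollary~\ref{C_1} are \emph{not} symmetric, since the Hölder argument in Proposition~\ref{P_1} treats them asymmetrically): simply take $\alpha=\nu$ on the $\bs e$-factor and $\beta=2-\nu$ on the $\bs z$-factor, so $\alpha\ge 0$ and $\beta>0$ correspond exactly to $0\le\nu<2$, matching Corollary~\ref{C_1}'s hypotheses verbatim. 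The genuine content is verifying the hypothesis of Corollary~\ref{C_1} at $\nu=0$, and here your argument diverges from the paper's in two places. First, where the paper chooses explicit coordinates, expands $(\bs e_\sigma,\bs z)^2$, and kills the cross term by computing a $\int_0^{2\pi}\sin\vartheta_{n-1}\,d\vartheta_{n-1}$ integral, you observe directly that $F_{\kappa,\lambda,\mu,0}(\bs z)=\bs z^\top M\bs z$ with $M$ commuting with rotations about $\bs e$, hence $M=a\,\bs e\bs e^\top+b(I-\bs e\bs e^\top)$ — a cleaner, coordinate-free version of the same diagonalization. Second, and more substantially: to show $a\ge b$ (the paper's $U>V$), the paper integrates by parts and invokes positivity of $\tfrac{d}{d\varphi}\omega_{\kappa,\lambda}(\sin\varphi)$, whereas you reduce to the moment inequality $n\int t^2 w\,dt\ge\int w\,dt$ for $w(t)=\omega_{\kappa,\lambda}(t)|t|^\mu(1-t^2)^{(n-3)/2}$ and prove it by the Chebyshev correlation inequality on $[0,1]$, using that the density ratio $\omega_{\kappa,\lambda}(t)|t|^\mu$ and the test function $t^2$ are comonotone in $|t|$, together with the elementary moment $\int_{{\mathbb S}^{n-1}}(\bs e_\sigma,\bs e)^2\,d\sigma=\omega_n/n$. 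This is a legitimate and arguably more conceptual alternative: it isolates monotonicity of $\omega_{\kappa,\lambda}$ and nonnegativity of $\mu$ as the only inputs, whereas the paper's integration by parts yields the strict inequality $U>V$ (not needed for the lemma) at the cost of a longer explicit computation. (Incidentally, the paper's line (\ref{Eq_1.20}) appears to drop a harmless factor $\mu+1$ in front of $V/(2\omega_{n-1})$; since $\mu\ge0$ this only strengthens the inequality, so its conclusion stands.)
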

\begin{proof} (i) \textit{The case $\nu=0$}. By (\ref{Eq_1.8}),
\begin{equation} \label{Eq_1.7}
F_{\kappa, \lambda, \mu, 0}(\bs z)=\int_{{\mathbb S}^{n-1}} 
\omega_{\kappa, \lambda}\big ((\bs e_\sigma, \bs e)\big )\big |(\bs e_\sigma, \bs e)|^{\mu}\big |(\bs e_\sigma, \bs z)|^{2} d\sigma\;.
\end{equation}
Let $\bs z'=\bs z- (\bs z, \bs e)\bs e$. 
We choose the Cartesian coordinates with origin ${\mathcal O}$ at the center of the sphere ${\mathbb S}^{n-1}$
such that $\bs e_1=\bs e$ and $\bs e_n$ is collinear to $\bs z'$. Then $\bs z=\alpha \bs e_1 + \beta \bs e_n$, 
where 
\begin{equation} \label{Eq_1.7ABC}
\alpha^2+\beta^2=1.
\end{equation}
 
Now, we rewrite (\ref{Eq_1.7}) in the form
\begin{eqnarray} \label{Eq_1.10}
\hspace{-1cm}& &F_{\kappa, \lambda, \mu, 0}(\bs z)\!=\!\!
\int_{{\mathbb S}^{n-1}} \!\!\omega_{\kappa, \lambda}\big ((\bs e_\sigma, \bs e_1)\big )\big |(\bs e_\sigma, \bs e_1)|^\mu  
\big (\bs e_\sigma, \alpha \bs e_1 + \beta \bs e_n \big ) ^2 d\sigma \nonumber\\
\hspace{-1cm}& &\nonumber\\
\hspace{-1cm}& &=\!\!\int_{{\mathbb S}^{n-1}}\!\!\!\omega_{\kappa, \lambda}\big ((\bs e_\sigma, \bs e_1)\big )\big |(\bs e_\sigma, \bs e_1)|^\mu  
\big [\alpha^2(\bs e_\sigma, \bs e_1)^2\! + \!2\alpha\beta (\bs e_\sigma, \bs e_1)(\bs e_\sigma, \bs e_n)\!+
\!\beta^2(\bs e_\sigma, \bs e_n)^2\big ] d\sigma .
\end{eqnarray}
Let us show that
\begin{equation} \label{Eq_1.11}
\int_{{\mathbb S}^{n-1}}\omega_{\kappa, \lambda}\big ((\bs e_\sigma, \bs e_1)\big ) |(\bs e_\sigma, \bs e_1) |^\mu  
(\bs e_\sigma, \bs e_1)(\bs e_\sigma, \bs e_n) d\sigma=0\;.
\end{equation}
The last equality is obvious for the case $n=2$. We suppose that $n\geq 3$.
We denote by $\vartheta_1, \vartheta_2,\dots,\vartheta_{n-1}$ the spherical coordinates with the center at ${\mathcal O}$, 
where $\vartheta_i \in [0, \pi]$ for $1\leq i\leq n-2$, and $\vartheta_{n-1} \in [0, 2\pi ]$. Then for any 
$\bs \sigma=(\sigma_1,\dots ,\sigma_n)\in {\mathbb S}^{n-1}$ we have
\begin{eqnarray*}
& &\sigma_1=\cos \vartheta_1,\\
& &\sigma_2=\sin \vartheta_1\cos \vartheta_2,\\
& &\dots\dots\dots\dots\dots\dots\dots\dots\\
& &\sigma_{n-1}=\sin \vartheta_1\dots\sin\vartheta_{n-2}\cos \vartheta_{n-1},\\
& &\sigma_n=\sin \vartheta_1\dots\sin\vartheta_{n-2}\sin \vartheta_{n-1}.
\end{eqnarray*}
Using the equalities
$$
(\bs e_\sigma, \bs e_1)=\sigma_1=\cos \vartheta_1,\;\;\;\;\;
(\bs e_\sigma, \bs e_n)=\sigma_n=\sin \vartheta_1\dots \sin\vartheta_{n-2}\sin \vartheta_{n-1}
$$
and
$$
d\sigma=\sin^{n-2}\vartheta_1\sin^{n-3} \vartheta_2\dots\sin\vartheta_{n-2}\;d\vartheta_1 d\vartheta_2 \dots d\vartheta_{n-1},
$$
we calculate the integral on the left-hand side of (\ref{Eq_1.11}):
\begin{eqnarray} \label{Eq_1.12}
\hspace{-1cm}& &\int_{{\mathbb S}^{n-1}}\omega_{\kappa, \lambda}\big ((\bs e_\sigma, \bs e_1)\big )|(\bs e_\sigma, \bs e_1)|^\mu  
(\bs e_\sigma, \bs e_1)(\bs e_\sigma, \bs e_n) d\sigma\nonumber\\
\hspace{-1cm}& &=\!\!\int_0^\pi\!\! ...\!\int_0^\pi\!\int_0^{2\pi}\!\omega_{\kappa, \lambda}\!\big (\cos \vartheta_1 \big )|\cos \vartheta_1|^\mu
\cos \vartheta_1\!\!\left (\prod_{i=1}^{n-2}\sin^{n-i} \vartheta_i \!\right )\! \sin \vartheta_{n-1}
d\vartheta_1 ... d\vartheta_{n-2}d\vartheta_{n-1}\nonumber\\
\hspace{-1cm}& &=I_{\kappa, \lambda}\int_0^\pi\!\! ...\int_0^\pi\
\!\left (\prod_{i=2}^{n-2}\sin^{n-i} \vartheta_i \!\right )d\vartheta_2 ... d\vartheta_{n-2}\!
\int_0^{2\pi}\sin \vartheta_{n-1}d\vartheta_{n-1}\;,
\end{eqnarray}
where
$$
I_{\kappa, \lambda}=\int_0^\pi \omega_{\kappa, \lambda}\!\big (\cos \vartheta_1 \big )|\cos \vartheta_1|^\mu
\cos \vartheta_1 \sin^{n-1} \vartheta_1 d\vartheta_1\;.
$$
Since the inner integral in (\ref{Eq_1.12}) is equal to zero, we arrive at (\ref{Eq_1.11}). 

So, by (\ref{Eq_1.7ABC}), (\ref{Eq_1.10}) and (\ref{Eq_1.11}), we have
\begin{equation} \label{Eq_1.13}
F_{\kappa, \lambda, \mu, 0}(\bs z)\!=\!\!\int_{{\mathbb S}^{n-1}}\!\!\!\omega_{\kappa, \lambda}\big ((\bs e_\sigma, \bs e_1)\big ) 
|(\bs e_\sigma, \bs e_1)|^\mu  
\big [\alpha^2(\bs e_\sigma, \bs e_1)^2\!+\!\beta^2(\bs e_\sigma, \bs e_n)^2\big ] d\sigma\leq \max \{ U,\; V \}\;,
\end{equation}
where
\begin{equation} \label{Eq_1.14}
U=\int_{{\mathbb S}^{n-1}}\!\!\!\omega_{\kappa, \lambda}\big ((\bs e_\sigma, \bs e_1)\big )|(\bs e_\sigma, \bs e_1)|^{\mu+2} d\sigma
\end{equation}
and
\begin{equation} \label{Eq_1.15}
V=\int_{{\mathbb S}^{n-1}}\!\!\!\omega_{\kappa, \lambda}\big ((\bs e_\sigma, \bs e_1)\big )|(\bs e_\sigma, \bs e_1)|^{\mu}(\bs e_\sigma, \bs e_n)^2 d\sigma .
\end{equation}

In view of (\ref{Eq_1.6A}) and the evenness of $\omega_{\kappa, \lambda}(u)$ in $u$, we can write (\ref{Eq_1.14}) as
$$
U=2\omega_{n-1}\int_0^{\pi/2} \omega_{\kappa, \lambda}(\cos \vartheta_1 )\cos ^{\mu+2}\vartheta_1\sin^{n-2}\vartheta_1 d\vartheta_1 .
$$
By the change of variable $\vartheta_1=\frac{\pi}{2}-\varphi$ in the integral on the right-hand side of the last equality, we obtain
\begin{equation} \label{Eq_1.16}
U=2\omega_{n-1}\int_0^{\pi/2} \omega_{\kappa, \lambda}(\sin \varphi )\sin ^{\mu+2} \varphi \cos ^{n-2}\varphi d\varphi\;.
\end{equation}

Now, we calculate the integral on the right-hand side of (\ref{Eq_1.15}):
\begin{eqnarray} \label{Eq_1.17}
\hspace{-1.5cm}& &V=\int_{{\mathbb S}^{n-1}}\omega_{\kappa, \lambda}\big ((\bs e_\sigma, \bs e_1)\big )
|(\bs e_\sigma, \bs e_1)|^{\mu}(\bs e_\sigma, \bs e_n)^2 d\sigma
\nonumber\\
\hspace{-1.5cm}& &\nonumber\\
\hspace{-1.5cm}& &=\int_0^\pi\dots\!\int_0^\pi\!\int_0^{2\pi}\!\omega_{\kappa, \lambda}(\cos \vartheta_1 )|\cos \vartheta_1|^{\mu}
\left ( \prod_{i=1}^{n-1}\sin^{n+1-i}\vartheta_i \right )d\vartheta_1\dots d\vartheta_{n-2}d\vartheta_{n-1}\nonumber\\
\hspace{-1.5cm}& &\nonumber\\
\hspace{-1.5cm}& &=\left \{\int_0^\pi \!\!\omega_{\kappa, \lambda}(\cos \vartheta_1 )|\cos \vartheta_1|^{\mu}\sin^{n}\vartheta_1 d\vartheta_1 \right \}\!
\left \{2\int_0^\pi \!...\int_0^\pi \left ( \prod_{i=2}^{n-1}\sin^{n+1-i}\vartheta_i\! \right ) 
d\vartheta_2...d\vartheta_{n-1}\right \}.
\end{eqnarray}
Putting $\vartheta_1=\varphi+\frac{\pi}{2}$ in the first integral on the right-hand side of (\ref{Eq_1.17}), 
we arrive at equality
\begin{equation} \label{Eq_1.18}
\int_0^\pi \omega_{\kappa, \lambda}(\cos \vartheta_1 )|\cos \vartheta_1|^{\mu}\sin^{n}\vartheta_1 d\vartheta_1
=2\int_0^{\pi/2} \omega_{\kappa, \lambda}(\sin \varphi )\sin ^{\mu} \varphi \cos ^{n}\varphi d\varphi\;.
\end{equation}
Evaluating the multiple integral on the right-hand side of (\ref{Eq_1.17}), we obtain 
\begin{eqnarray*} 
& &2\int_0^\pi ...\int_0^\pi \left ( \prod_{i=2}^{n-1}\sin^{n+1-i}\vartheta_i \right ) 
d\vartheta_2...d\vartheta_{n-1}=2\cdot 2^{n-2}\prod_{k=2}^{n-1}\int_0^{\pi/2}\sin^k\vartheta d\vartheta\\
& &\\
& &=\frac{2^{n-1}}{2^{n-2}}\prod_{k=2}^{n-1} \frac {\Gamma \left (\frac{k+1}{2} \right )\Gamma \left (\frac{1}{2} \right )}
{\Gamma \left (\frac{k+2}{2} \right )}=\frac{2\pi^{(n-1)/2} }{(n-1)\Gamma \left (\frac{n-1}{2} \right )}=\frac{\omega_{n-1}}{n-1}\;,
\end{eqnarray*}
which together with (\ref{Eq_1.17}) and (\ref{Eq_1.18}) leads to
\begin{equation} \label{Eq_1.19}
V=\frac{2\omega_{n-1}}{n-1}\int_0^{\pi/2} \omega_{\kappa, \lambda}(\sin \varphi )\sin ^{\mu} \varphi \cos ^{n}\varphi d\varphi\;.
\end{equation}

Let us show that $U>V$. Integrating by parts in (\ref{Eq_1.16}), we have
\begin{eqnarray*} 
& &\frac{U}{2\omega_{n-1}}=-\frac{1}{n-1}\int_0^{\pi/2}\omega_{\kappa, \lambda}(\sin \varphi )\sin^{\mu+1} \varphi \;d\big(\cos ^{n-1}\varphi \big )\\
& &\\
& &=\frac{1}{n-1}\int_0^{\pi/2}\cos ^{n-1}\varphi\;d\big ( \omega_{\kappa, \lambda}(\sin \varphi )\sin^{\mu+1} \varphi\big )\nonumber\\
& &\\
& &=\frac{1}{n-1}\int_0^{\pi/2}\cos ^{n-1}\varphi \left \{(\mu+1) \sin^{\mu}\varphi \cos \varphi\;\omega_{\kappa, \lambda}(\sin \varphi ) +
\sin^{\mu+1} \varphi \frac{d}{d \varphi} \omega_{\kappa, \lambda}(\sin \varphi ) \right \}d\varphi .
\end{eqnarray*}
In view of (\ref{Eq_1.19}), we can rewrite the last equality as
\begin{equation} \label{Eq_1.20}
\frac{U}{2\omega_{n-1}}=\frac{V}{2\omega_{n-1}}+
\frac{1}{n-1}\int_0^{\pi/2} \left \{\cos ^{n-1}\varphi \sin^{\mu+1} \varphi \frac{d}{d \varphi} \omega_{\kappa, \lambda}(\sin \varphi ) \right \} d\varphi\;.
\end{equation}
By definition (\ref{Eq_1.6AB}) of the function $\omega_{\kappa, \lambda}$, we arrive at
$$
\frac{d}{d \varphi} \omega_{\kappa, \lambda}(\sin \varphi )=\left ( \frac{\kappa}{\sin^2 \varphi}\right )^\lambda 
e^{-\kappa/\sin^2 \varphi}\; \frac{2\kappa\cos \varphi}{\sin^{3} \varphi}> 0\;\;\mbox{for}\;\;\; \varphi \in \left ( 0 , \frac{\pi}{2} \right ),
$$
which together with (\ref{Eq_1.20}) implies
$$
U>V.
$$
This, by (\ref{Eq_1.13}) and (\ref{Eq_1.14}), leads to the inequality
\begin{equation} \label{Eq_1.21}
\max_{|\bs z|=1}F_{\kappa, \lambda, \mu, 0}(\bs z)\leq\!\int_{{\mathbb S}^{n-1}}\!\!\!\omega_{\kappa, \lambda}\big ((\bs e_\sigma, \bs e_1)\big ) 
|(\bs e_\sigma, \bs e_1)|^{\mu+2}d\sigma\;.
\end{equation}
By (\ref{Eq_1.6A}), the value of the integral 
$$
\int_{{\mathbb S}^{n-1}}\!\!\!\omega_{\kappa, \lambda}\big ((\bs e_\sigma, \bs e)\big )|(\bs e_\sigma, \bs e)|^{\mu+2} d\sigma
$$
is independent of $\bs e$. Hence, by (\ref{Eq_1.21}),
\begin{equation} \label{Eq_1.22}
\max_{|\bs z|=1}F_{\kappa, \lambda, \mu, 0}(\bs z)\leq\int_{{\mathbb S}^{n-1}}\!\!\!\omega_{\kappa, \lambda}\big ((\bs e_\sigma, \bs e )\big ) |
(\bs e_\sigma, \bs e )|^{\mu+2}d\sigma\;.
\end{equation}

The obvious lower estimate
$$
\max_{|\bs z|=1}F_{\kappa, \lambda, \mu, 0}(\bs z)\geq F_{\kappa, \lambda, \mu, 0}(\bs e)=
\int_{{\mathbb S}^{n-1}}\omega_{\kappa, \lambda}\big ((\bs e_\sigma, \bs e )\big ) |(\bs e_\sigma, \bs e )|^{\mu+2}d\sigma
$$
together with (\ref{Eq_1.22}), leads to (\ref{Eq_1.9}) for the case $\nu=0$.

\medskip
(ii) \textit{The case $\nu\in (0, 2)$}. By (\ref{Eq_1.6R}), we rewrite (\ref{Eq_1.8}) as
$$
F_{\kappa, \lambda, \mu, \nu}(\bs z)=\int_{{\mathbb S}^{n-1}}\rho_{\kappa, \lambda,\mu}
\big ((\bs e_\sigma, \bs e) \big )f^\nu \big ((\bs e_\sigma, \bs e)\big )f^{2-\nu} \big ((\bs e_\sigma, \bs z)\big )  d\sigma\;.
$$
By part (i) of the proof,
$$
\max_{|\bs z|=1}F_{\kappa, \lambda, \mu, 0}(\bs z)=F_{\kappa, \lambda, \mu, 0}(\bs e)=
\int_{{\mathbb S}^{n-1}}\rho_{\kappa, \lambda,\mu}
\big ((\bs e_\sigma, \bs e) \big )f^{2} \big ((\bs e_\sigma, \bs e)\big )  d\sigma\;,
$$
which, by Corollary \ref{C_1} with $\gamma=2$, implies
$$
\max_{|\bs z|=1}F_{\kappa, \lambda, \mu, \nu}(\bs z)=\int_{{\mathbb S}^{n-1}}\rho_{\kappa, \lambda,\mu}
\big ((\bs e_\sigma, \bs e) \big )f^{2} \big ((\bs e_\sigma, \bs e)\big )  d\sigma\;.
$$
Last inequality combined with (\ref{Eq_1.6R}), proves (\ref{Eq_1.9}) for any $\nu \in (0, 2)$.
\end{proof} 

\section{Weighted estimate for solutions of the Dirichlet problem} \label{S_3}

Here we deal with a solution of the first boundary value problem for the heat equation:
\begin{eqnarray} \label{Eq_4.1}
\left\{\begin{array}{lll}
\displaystyle{\frac{\partial u}{\partial t}=a^2\Delta u}\;,&\quad(x,t)\in {\mathbb R}^n_+\times (0, +\infty),\\
       \\
\displaystyle{u\big |_{t=0}=0}\;,\\
        \\
\displaystyle{u\big |_{x_n=0}=f(x', t)}\;.
\end{array}\right .
\end{eqnarray}
Here $f\in L^p\big ({\mathbb R}^{n-1}\times (0, +\infty ) \big )$, $1\leq p\leq \infty$, and $u$ is 
represented by the heat double layer potential
\begin{equation} \label{Eq_4.2}
u(x, t)=\frac{x_n}{\big ( 4a^2\pi \big )^{n/2}}\int_0^t\int_{{\mathbb R}^{n-1}}\displaystyle{\frac{e^{-\frac{|x-y|^2}{4a^2 (t-\tau)}}}{(t-\tau )^{(n+2)/2}} } f(y', \tau) dy' d\tau
\end{equation}
with $y=(y', 0), y'\in {\mathbb R}^{n-1}$. The norm $\nl f \nr_{p, t}$ was introduced in (\ref{Eq_0.3}).

\begin{proposition} \label{P_4} Let $(x, t)$ be an arbitrary point in ${\mathbb R}^n _+\times (0, +\infty )$. 
The sharp coefficient ${\mathcal W}_p(x, t)$ in the inequality
\begin{equation} \label{Eq_4.3}
\left | \nabla_x \left \{ \frac{u(x, t)}{x_n} \right \}\right |\leq {\mathcal W}_p(x, t) ||f||_{p, t}
\end{equation}
is given by
\begin{equation} \label{Eq_4.4}
{\mathcal W}_p(x,t)=\frac{c_{n,p}}{x_n^{2+\frac{n+1}{p}}}
\max_{|\bs z|=1}\left \{ \int_{{\mathbb S}^{n-1}}\omega_{\kappa , \lambda} \big ( ( \bs e_{\sigma}, \bs e_n )\big )
|( \bs e_{\sigma}, \bs e_n )|^{\frac{n+p+2}{p-1}}|( \bs e_{\sigma}, \bs z )|^{\frac{p}{p-1}} d\sigma\right \}^{\frac{p-1}{p}}\!,
\end{equation}
where
\begin{equation} \label{Eq_4.4A}
c_{n,p}=\frac{2^{\frac{1}{p}}(4a^2)^{1+\frac{1}{p}}}{\pi^{\frac{n}{2}-1}q^{\frac{n}{2}+1+\frac{1}{p}}}\;,
\end{equation} 
$p^{-1}+q^{-1}=1$, $\omega_{\kappa , \lambda}(x)$ is defined by $(\ref{Eq_1.6AB})$ and
\begin{equation} \label{Eq_4.12A}
\kappa=\frac{q x_n^2}{4a^2 t}=\frac{p x_n^2}{4a^2(p-1)t}\;,\;\;\;\;\;\lambda=\frac{(n+4)q}{2}-2=\frac{np+4}{2(p-1)}\;.
\end{equation}

In particular,
\begin{equation} \label{Eq_4.5}
{\mathcal W}_p(x, t)=\frac{c_{n,p}}{x_n^{2+\frac{n+1}{p}}}\left \{ \!2\omega_{n-1}\int_0^{\pi/2} 
\left \{\int_{\frac{q x_n^2}{4a^2 t\cos^2\vartheta}}^\infty \xi^{\frac{np+4}{2(p-1)}}e^{-\xi}d\xi \right \}
 \cos ^{\frac{n+2(p+1)}{p-1}}\vartheta 
\sin^{n-2}\vartheta d\vartheta\!\right \}^{\frac{p-1}{p}}
\end{equation}
for $2\leq p\leq \infty$.

As a special case of $(\ref{Eq_4.5})$ one has
\begin{equation} \label{Eq_4.5A}
{\mathcal W}_\infty(x, t)=\frac{16a^2\sqrt{\pi}}{\Gamma \left (\frac{n-1}{2} \right )x_n^2}\int_0^{\pi/2} 
\left \{\int_{ \frac{x^2_n}{4a^2 t \cos^2\varphi}}^\infty \xi^{n/2} e^{-\xi}d\xi \right \}
\cos ^{2}\vartheta \sin^{n-2}\vartheta d\vartheta\;.
\end{equation}
\end{proposition}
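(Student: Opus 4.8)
The plan is to compute $\nabla_x\{u(x,t)/x_n\}$ directly from the representation \eqref{Eq_4.2}, apply H\"older's inequality to obtain the coefficient as a sup of an integral over ${\mathbb R}^{n-1}\times(0,t)$, reduce that integral to an integral over the unit sphere times a one-dimensional integral in the radial variable (which generates the incomplete Gamma-function $\omega_{\kappa,\lambda}$), and finally invoke Lemma \ref{L_1} to evaluate the resulting extremal problem in the range where it applies. First I would write $u(x,t)/x_n=\big(4a^2\pi\big)^{-n/2}\int_0^t\int_{{\mathbb R}^{n-1}}(t-\tau)^{-(n+2)/2}e^{-|x-y|^2/(4a^2(t-\tau))}f(y',\tau)\,dy'\,d\tau$ and differentiate under the integral sign with respect to $x=(x',x_n)$; since $y=(y',0)$, the gradient of the Gaussian produces the factor $-\frac{1}{2a^2(t-\tau)}(x-y)$, so the kernel of $\nabla_x\{u/x_n\}$ is the vector-valued function $K(x-y,t-\tau)=-\big(4a^2\pi\big)^{-n/2}\frac{x-y}{2a^2(t-\tau)^{(n+4)/2}}e^{-|x-y|^2/(4a^2(t-\tau))}$. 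Then $|\nabla_x\{u/x_n\}|=\sup_{|\bs z|=1}\int_0^t\int_{{\mathbb R}^{n-1}}(\bs z,K(x-y,t-\tau))f(y',\tau)\,dy'\,d\tau$, and H\"older with exponents $p,q$ gives $\mathcal W_p(x,t)=\sup_{|\bs z|=1}\big\{\int_0^t\int_{{\mathbb R}^{n-1}}|(\bs z,K(x-y,t-\tau))|^q\,dy'\,d\tau\big\}^{1/q}$. The sharpness is standard: one chooses $f$ (approximately) aligned with $\mathrm{sgn}\,(\bs z,K)\cdot|(\bs z,K)|^{q-1}$ for the optimal $\bs z$.

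Next I would evaluate the $L^q$ norm of the kernel. Substituting $w=x-y=(x'-y',x_n)$, so that $w$ ranges over the hyperplane $\{w_n=x_n\}$, and $s=t-\tau\in(0,t)$, the quantity $\int_0^t\int_{{\mathbb R}^{n-1}}|(\bs z,w)|^q|w|^{0}\,s^{-q(n+4)/2}e^{-q|w|^2/(4a^2 s)}\,dw'\,ds$ (up to the constant $(4a^2\pi)^{-nq/2}(2a^2)^{-q}$) must be brought to spherical form. Passing to polar coordinates in the full ${\mathbb R}^n$ is the natural device: write $w=r\bs e_\sigma$ with $r=|w|\geq x_n$ and $\bs e_\sigma\in{\mathbb S}^{n-1}$, noting $w_n=r(\bs e_\sigma,\bs e_n)=x_n$ forces $r=x_n/|(\bs e_\sigma,\bs e_n)|$ on the hyperplane — more precisely, one parametrizes the hyperplane $\{w_n=x_n\}$ by its intersection with rays, getting $dw'=$ (Jacobian involving $|(\bs e_\sigma,\bs e_n)|$) $\,r^{n-2}dr\,d\sigma$ restricted appropriately; a cleaner route is to integrate $s$ first. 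Doing the $s$-integral $\int_0^t s^{-q(n+4)/2}e^{-q|w|^2/(4a^2 s)}\,ds$ by the substitution $\xi=q|w|^2/(4a^2 s)$ turns it into $\big(q|w|^2/(4a^2)\big)^{1-q(n+4)/2}\int_{q|w|^2/(4a^2 t)}^\infty \xi^{q(n+4)/2-2}e^{-\xi}\,d\xi$, i.e.\ exactly $\omega_{\kappa',\lambda}$-type with $\lambda=\tfrac{(n+4)q}{2}-2$. After collecting all powers of $|w|$ and writing $|(\bs z,w)|^q=|w|^q|(\bs e_\sigma,\bs z)|^q$, the spatial integral $\int_{\{w_n=x_n\}}(\cdots)\,dw'$ becomes, via the ray parametrization of the hyperplane, $x_n^{-(2+(n+1)/p)}$ times $\int_{{\mathbb S}^{n-1}}\omega_{\kappa,\lambda}\big((\bs e_\sigma,\bs e_n)\big)|(\bs e_\sigma,\bs e_n)|^{(n+p+2)/(p-1)}|(\bs e_\sigma,\bs z)|^{p/(p-1)}\,d\sigma$ with $\kappa=qx_n^2/(4a^2 t)$; tracking the constants yields $c_{n,p}$ of \eqref{Eq_4.4A}. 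Taking the $(p-1)/p$ power gives \eqref{Eq_4.4}. I expect this bookkeeping of exponents and the correct Jacobian for restricting from ${\mathbb R}^n$-polar coordinates to the hyperplane $\{w_n=x_n\}$ to be the main technical obstacle — the exponent $(n+p+2)/(p-1)$ on $|(\bs e_\sigma,\bs e_n)|$ is the sum of a "$|w|^{n-2}dr$" contribution, the radial power left over from the $s$-integral and the Gaussian normalization, and the $1/|(\bs e_\sigma,\bs e_n)|$ from the hyperplane Jacobian, all re-expressed through $r=x_n/|(\bs e_\sigma,\bs e_n)|$, so it is easy to slip by a power.

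For the explicit formula \eqref{Eq_4.5}, I would apply Lemma \ref{L_1} with $f(u)=|u|$, $\rho_{\kappa,\lambda,\mu}(u)=\omega_{\kappa,\lambda}(u)|u|^\mu$, and the identifications $\mu+\nu=\tfrac{n+p+2}{p-1}$, $2-\nu=\tfrac{p}{p-1}=\tfrac{q}{q-1}$, hence $\nu=\tfrac{p-2}{p-1}$ and $\mu=\tfrac{n+4}{p-1}$. The constraint $0\le\nu<2$ in Lemma \ref{L_1} reads $0\le\tfrac{p-2}{p-1}<2$, i.e.\ $p\ge 2$, which is precisely the stated range $2\le p\le\infty$ (for $p=\infty$ one has $\nu=1$, $\mu=0$, and one passes to the limit, the exterior factor $(p-1)/p\to1$). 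Lemma \ref{L_1} then gives $\max_{|\bs z|=1}F_{\kappa,\lambda,\mu,\nu}(\bs z)=\int_{{\mathbb S}^{n-1}}\omega_{\kappa,\lambda}\big((\bs e_\sigma,\bs e_n)\big)|(\bs e_\sigma,\bs e_n)|^{\mu+2}\,d\sigma$ with $\mu+2=\tfrac{n+2(p+1)}{p-1}$; using \eqref{Eq_1.6A} (evenness of $\omega_{\kappa,\lambda}$) to write this as $2\omega_{n-1}\int_0^{\pi/2}\omega_{\kappa,\lambda}(\cos\vartheta)\cos^{(n+2(p+1))/(p-1)}\vartheta\,\sin^{n-2}\vartheta\,d\vartheta$ and spelling out $\omega_{\kappa,\lambda}(\cos\vartheta)=\int_{\kappa/\cos^2\vartheta}^\infty\xi^{(np+4)/(2(p-1))}e^{-\xi}d\xi$ reproduces \eqref{Eq_4.5}. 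Finally, \eqref{Eq_4.5A} follows by setting $p=\infty$: then $q=1$, $\kappa=x_n^2/(4a^2 t)$, $\lambda=n/2$, the exterior power is $1$, the cosine exponent is $2$, and the constant simplifies to $c_{n,\infty}\cdot2\omega_{n-1}=16a^2\sqrt\pi/\big(\Gamma(\tfrac{n-1}{2})x_n^2\big)$ after using $\omega_{n-1}=2\pi^{(n-1)/2}/\Gamma(\tfrac{n-1}{2})$.
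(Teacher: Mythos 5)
Your proposal is correct and follows essentially the same route as the paper: differentiate the double-layer representation, take the inner product with a unit vector $\bs z$ and apply H\"older to get $\mathcal W_p$ as an $L^q$-norm of the kernel with sharpness by alignment, perform the time integral by the substitution $\xi=q|x-y|^2/(4a^2(t-\tau))$ to produce the incomplete Gamma-factor $\omega_{\kappa,\lambda}$, convert the hyperplane integral to a spherical one via the solid-angle relation $d\sigma=x_n|y-x|^{-n}dy'$ together with $|y-x|\,|(\bs e_{xy},\bs e_n)|=x_n$, use evenness to pass from the lower hemisphere to ${\mathbb S}^{n-1}$ (source of the $2^{1/p}$), and finally invoke Lemma \ref{L_1} with $\nu=\frac{p-2}{p-1}$, $\mu=\frac{n+4}{p-1}$, whose constraints $0\le\nu<2$, $\mu\ge0$ give exactly $p\ge2$. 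Two small remarks: for $p=\infty$ no limiting argument is needed, since $\nu=1$, $\mu=0$ satisfy the hypotheses of Lemma \ref{L_1} directly; and the exponent bookkeeping you flagged as the main hazard does come out as you predicted, matching \eqref{Eq_4.13}--\eqref{Eq_4.14} of the paper.
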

\begin{proof} (i) \textit{General case}. By (\ref{Eq_4.2}), 
$$
\frac{u(x, t)}{x_n}=\frac{1}{\big ( 4a^2\pi \big )^{n/2}}\int_0^t\int_{{\mathbb R}^{n-1}}\displaystyle{\frac{e^{-\frac{|x-y|^2}{4a^2 (t-\tau)}}}{(t-\tau )^{(n+2)/2}} } f(y', \tau) dy' d\tau\;.
$$
Differentiating with respect to $x_j$, $j=1,\dots , n$, we obtain
$$
\nabla_x \left \{ \frac{u(x, t)}{x_n} \right \}=\frac{2\pi}{\big ( 4a^2\pi \big )^{(n+2)/2}}
\int_0^t\int_{{\mathbb R}^{n-1}}\frac{y-x}{(t-\tau )^{(n+4)/2}}e^{-\frac{|x-y|^2}{4a^2 (t-\tau)}}f(y', \tau)dy'd\tau .
$$
Hence,
\begin{equation} \label{Eq_4.6}
\left ( \nabla_x \left \{ \frac{u(x, t)}{x_n} \right \}, \bs z \right )= \frac{2\pi}{\big ( 4a^2\pi \big )^{(n+2)/2}}
\int_0^t\int_{{\mathbb R}^{n-1}}\frac{|y-x|\big ( \bs e_{xy}, \bs z \big )}{(t-\tau )^{(n+4)/2}}e^{-\frac{|x-y|^2}{4a^2 (t-\tau)}}
f(y', \tau)dy'd\tau ,
\end{equation}
where $\bs z$ is a unit $n$-dimensional vector and $\bs e_{xy}=(y-x)/|y-x|$. By (\ref{Eq_4.6}), we conclude that
the sharp coefficient ${\mathcal W}_p(x, t)$ in inequality (\ref{Eq_4.3}) is given by
$$
{\mathcal W}_p(x, t)=\frac{2\pi}{\big ( 4a^2\pi \big )^{n/2}}\max_{|\bs z|=1}\left \{ \int_0^t\int_{{\mathbb R}^{n-1}}
\frac{|y-x|^q |\big ( \bs e_{xy}, \bs z \big ) |^q}{(t-\tau )^{(n+4)q/2}} e^{-\frac{q|y-x|^2}{4a^2 (t-\tau)}}dy' d\tau \right \}^{1/q}. 
$$

We write the last equality in the form
\begin{equation} \label{Eq_4.8}  
{\mathcal W}_p(x, t)=\!\frac{2\pi}{\big ( 4a^2\pi \big )^{n/2}}\max_{|\bs z|=1}\!\left \{\!\!\! \int_{{\mathbb R}^{n-1}}
\!\!\frac{|y\!-\!x|^{q+n}|\big ( \bs e_{xy}, \bs z \big ) |^q}{x_n}\frac{x_n}{|y\!-\!x|^n}  dy'\!\!\int_0^t 
\!\!\frac{e^{-\frac{q|y-x|^2}{4a^2 (t\!-\!\tau)}}}{(t\!-\!\tau )^{(n+4)q/2}} d\tau  \!\right \}^{\!1/q}\!\!\!\!. 
\end{equation}
Setting
$$
s=\frac{q |x-y|^2}{4a^2(t-\tau)}\;,  
$$
we represent the inner integral on the right-hand side of (\ref{Eq_4.8}) as
\begin{equation} \label{Eq_4.9}
\int_0^t \frac{e^{-\frac{q|y-x|^2}{4a^2 (t-\tau)}}}{(t-\tau )^{(n+4)q/2}} d\tau= 
\left (\frac{4a^2}{q|y-x|^2} \right )^{\frac{(n+4)q}{2}-1}\int ^\infty _{\frac{q|y-x|^2}{4a^2 t}} s^{\frac{(n+4)q}{2}-2}e^{-s} ds.
\end{equation}
By (\ref{Eq_1.6AB}) and the equality 
\begin{equation} \label{Eq_4.10}
|y-x||\big ( \bs e_{xy}, \bs e_n \big )|=x_n\; , 
\end{equation}
we write (\ref{Eq_4.9}) as
\begin{equation} \label{Eq_4.11}
\int_0^t \frac{e^{-\frac{q|y-x|^2}{4a^2 (t-\tau)}}}{(t-\tau )^{(n+4)q/2}} d\tau=
\left (\frac{4a^2 \big ( \bs e_{xy}, \bs e_n \big )^2}{qx_n^2} \right )^{\frac{(n+4)q}{2}-1}
\omega_{\kappa , \lambda} \big ( ( \bs e_{xy}, \bs e_n )\big ) ,
\end{equation}
where $\kappa$ and $\lambda$ are defined by (\ref{Eq_4.12A}).

In view of (\ref{Eq_4.10}), we have
\begin{equation} \label{Eq_4.12}
|y-x|^{q+n}=\left ( \frac{x_n}{|\big ( \bs e_{xy}, \bs e_n \big )|} \right )^{q+n}\;,
\end{equation}
which, in combination with (\ref{Eq_4.8}) and (\ref{Eq_4.11}), leads to
\begin{equation} \label{Eq_4.13}
{\mathcal W}_p(x, t)=\!\frac{2(4a^2)^{1+\frac{1}{p}}}{\pi^{\frac{n}{2}\!-\!1}q^{\frac{n}{2}\!+\!1+\!\frac{1}{p}}x_n^{2\!+\!\frac{n\!+\!1}{p}}}
\max_{|\bs z|=1}\!\left \{\! \int_{{\mathbb S}^{n-1}_-}\!\!\!\omega_{\kappa , \lambda} \big ( ( \bs e_{\sigma}, \bs e_n )\big )
|( \bs e_{\sigma}, \bs e_n )|^{\frac{n\!+\!p\!+\!2}{p-1}}|( \bs e_{\sigma}, \bs z )|^{\frac{p}{p\!-\!1}} d\sigma\right \}^{\frac{p\!-\!1}{p}}\!\!\!, 
\end{equation}
where 
${\mathbb S}^{n-1}_- =\{ \sigma \in {\mathbb S}^{n-1}: (\bs e_\sigma , \bs e_n )<0 \}$.

Using the evenness of the integrand in (\ref{Eq_4.13}) with respect to $\bs e_\sigma$, we obtain
\begin{equation} \label{Eq_4.14}
{\mathcal W}_p(x,t)=\!\frac{2^{\frac{1}{p}}(4a^2)^{1+\frac{1}{p}}}{\pi^{\frac{n}{2}\!-\!1}q^{\frac{n}{2}\!+\!1+
\!\frac{1}{p}}x_n^{2\!+\!\frac{n\!+\!1}{p}}}
\max_{|\bs z|=1}\!\left \{\! \int_{{\mathbb S}^{n-1}}\!\!\!\omega_{\kappa , \lambda } \big ( ( \bs e_{\sigma}, \bs e_n )\big )
|( \bs e_{\sigma}, \bs e_n )|^{\frac{n\!+\!p\!+\!2}{p\!-\!1}}|( \bs e_{\sigma}, \bs z )|^{\frac{p}{p\!-\!1}} d\sigma\right \}^{\frac{p\!-\!1}{p}}\!\!\!, 
\end{equation} 
which proves (\ref{Eq_4.4}).

(ii) \textit{The case $p\in [2, \infty]$}. Solving the system
$$
2-\nu=\frac{p}{p-1}\;,\;\;\;\;\;\;\mu+\nu=\frac{n+p+2}{p-1}
$$
with respect to $\nu$ and $\mu$, we arrive at 
$$
\nu=\frac{p-2}{p-1}\;,\;\;\;\;\;\;\mu=\frac{n+4}{p-1}\;.
$$
So, $\mu>0$ for any $p>1 $ and $\nu \in [0, 1)$ for $p\geq 2$. Applying Lemma \ref{L_1} to (\ref{Eq_4.14}), we conclude
\begin{equation} \label{Eq_4.15}
{\mathcal W}_p(x,t)=\frac{c_{n,p}}{x_n^{2+\frac{n+1}{p}}}
\left \{\int_{{\mathbb S}^{n-1}}\omega_{\kappa , \lambda } \big ( ( \bs e_{\sigma}, \bs e_n )\big )
|( \bs e_{\sigma}, \bs e_n )|^{\frac{n+2(p+1)}{p-1}} d\sigma\right \}^{\frac{p-1}{p}} , 
\end{equation}
where $p\in [2, \infty]$ and the constant $c_{n, p}$ is defined by (\ref{Eq_4.4A}). By (\ref{Eq_1.6A}) and
(\ref{Eq_1.6AB}), we write (\ref{Eq_4.15}) as (\ref{Eq_4.5}). 
\end{proof}

\section{Estimate for solutions of the Neumann problem} \label{S_4}

Let us consider the Neumann problem for the heat equation:
\begin{eqnarray} \label{Eq_6.1}
\left\{\begin{array}{lll}
\displaystyle{\frac{\partial u}{\partial t}=a^2\Delta u}\;,&\quad(x,t)\in {\mathbb R}^n_+\times (0, +\infty),\\
       \\
\displaystyle{u\big |_{t=0}=0}\;,\\
        \\
\displaystyle{\frac{\partial u}{\partial x_n}\Big |_{x_n=0}=g(x', t)}
\end{array}\right .
\end{eqnarray}
with $g\in L^p\big ({\mathbb R}^{n-1}\times (0, +\infty ) \big )$, $1\leq p \leq \infty$. 
Here $u$ is represented as the heat single layer potential
\begin{equation} \label{Eq_6.2}
u(x, t)=-\frac{2a^2}{\big ( 4a^2\pi \big )^{n/2}}\int_0^t\int_{{\mathbb R}^{n-1}}\displaystyle{\frac{e^{-\frac{|x-y|^2}{4a^2 (t-\tau)}}}{(t-\tau )^{n/2}} }g(y', \tau) dy' d\tau\;,
\end{equation}
where $y=(y', 0), y'\in {\mathbb R}^{n-1}$.

\begin{proposition} \label{P_5} Let $(x, t)$ be an arbitrary point in ${\mathbb R}^n _+\times (0, +\infty )$.
The sharp coefficient ${\mathcal N}_p(x, t)$ in the inequality
\begin{equation} \label{Eq_6.3}
|\nabla_x u(x, t)|\leq {\mathcal N}_p(x, t) ||g||_{p, t}
\end{equation}
is given by
\begin{equation} \label{Eq_6.4}
{\mathcal N}_p(x, t)=\frac{k_{n,p}}{x_n^{\frac{n+1}{p}}}\max_{|\bs z|=1}
\left \{ \int_{{\mathbb S}^{n-1}}\omega_{\kappa , \lambda} \big ( ( \bs e_{\sigma}, \bs e_n )\big )
|( \bs e_{\sigma}, \bs e_n )|^{\frac{n-p+2}{p-1}}|( \bs e_{\sigma}, \bs z )|^{\frac{p}{p\!-\!1}} d\sigma\right \}^{\frac{p-1}{p}}\!,
\end{equation}
where
\begin{equation} \label{Eq_6.4A}
k_{n,p}=\frac{2^{(3-p)/p} a^{2/p}}{\pi^{n/2}q^{\frac{n}{2}+\frac{1}{p}}}\;,
\end{equation}
$p^{-1}+q^{-1}=1$, $\omega_{\kappa , \lambda}(x)$ is defined by $(\ref{Eq_1.6AB})$ and
\begin{equation} \label{Eq_6.12A}
\kappa=\frac{q x_n^2}{4a^2 t}=\frac{p x_n^2}{4a^2(p-1)t}\;,\;\;\;\;\;\lambda=\frac{(n+2)q}{2}-2=\frac{(n-2)p+4}{2(p-1)}\;.
\end{equation}

In particular,
\begin{equation} \label{Eq_6.5}
{\mathcal N}_p(x, t)=\frac{k_{n,p}}{x_n^{\frac{n+1}{p}}}
\left \{ 2\omega_{n-1}\!\! \int_0^{\pi/2} \!\!
\left \{\int_{\frac{q x_n^2}{4a^2 t \cos^2\vartheta}}^\infty \xi^{\frac{(n-2)p+4}{2(p-1)}}e^{-\xi}d\xi \right \}
 \cos ^{\frac{n+2}{p-1}}\vartheta \sin^{n-2}\vartheta d\vartheta\right \}^{\frac{p\!-\!1}{p}}
\end{equation}
for $2\leq p\leq (n+4)/2$.

As a special case of $(\ref{Eq_6.5})$ one has
\begin{equation} \label{Eq_6.5A}
{\mathcal N}_2(x, t)=\frac{b_{n}}{x_n^{\frac{n+1}{2}}}\left \{\int_0^{\pi/2} 
\left \{\int_{ \frac{x^2_n}{2a^2 t \cos^2\vartheta }}^\infty \xi^{n} e^{-\xi}d\xi \right \}
\cos ^{n+2}\vartheta \cos^{n-2}\vartheta d\vartheta \right \}^{1/2}\;,
\end{equation}
where
$$
b_{n}=\frac{a}{2^{\frac{n-1}{2}}\pi^{\frac{n+1}{4}}\sqrt{\Gamma \left (\frac{n-1}{2} \right )}}\;.
$$
\end{proposition}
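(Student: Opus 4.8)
The plan is to follow the same scheme as in the proof of Proposition~\ref{P_4}, adapted to the heat single layer potential \eqref{Eq_6.2}. First I would differentiate \eqref{Eq_6.2} with respect to $x_j$, $j=1,\dots,n$, which brings down a factor $(y-x)/(t-\tau)$ and lowers the power of $(t-\tau)$ by one; projecting onto an arbitrary unit vector $\bs z$ and using that the sharp constant in a linear estimate is the $L^q$-norm of the kernel, I obtain
$$
{\mathcal N}_p(x,t)=\frac{2a^2}{(4a^2\pi)^{n/2}}\max_{|\bs z|=1}\left\{\int_0^t\int_{{\mathbb R}^{n-1}}\frac{|y-x|^q\,|(\bs e_{xy},\bs z)|^q}{(t-\tau)^{(n+2)q/2}}\,e^{-\frac{q|y-x|^2}{4a^2(t-\tau)}}\,dy'\,d\tau\right\}^{1/q},
$$
with $\bs e_{xy}=(y-x)/|y-x|$. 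This is the exact analogue of the displayed formula in part (i) of the proof of Proposition~\ref{P_4}, the only differences being the constant $2a^2$ in front and the exponent $(n+2)q/2$ in place of $(n+4)q/2$.

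Next I would carry out the two changes of variable exactly as before. The substitution $s=q|x-y|^2/(4a^2(t-\tau))$ turns the inner $\tau$-integral into an incomplete Gamma integral; together with the identity $|y-x|\,|(\bs e_{xy},\bs e_n)|=x_n$ (equation \eqref{Eq_4.10}) this produces the factor $\omega_{\kappa,\lambda}\big((\bs e_{xy},\bs e_n)\big)$ with $\kappa=qx_n^2/(4a^2t)$ and $\lambda=(n+2)q/2-2$, i.e.\ the values in \eqref{Eq_6.12A}; note the shift from $\lambda=(n+4)q/2-2$ in Proposition~\ref{P_4} is precisely the shift in the power of $(t-\tau)$. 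Then passing from the $(n-1)$-dimensional integral over $y'$ to an integral over the lower half ${\mathbb S}^{n-1}_-$ of the unit sphere via $dy'=x_n|y-x|^{-n}d\sigma$, and using $|y-x|^{q+n-2}=(x_n/|(\bs e_{xy},\bs e_n)|)^{q+n-2}$, collects all powers of $x_n$ outside the integral and yields the exponent $\tfrac{n-p+2}{p-1}$ on $|(\bs e_\sigma,\bs e_n)|$. Finally exploiting the evenness of the integrand in $\bs e_\sigma$ to replace ${\mathbb S}^{n-1}_-$ by ${\mathbb S}^{n-1}$ (at the cost of a factor $2^{1/q}=2^{(p-1)/p}$, which combines with the rest to give $k_{n,p}$ as in \eqref{Eq_6.4A}) establishes \eqref{Eq_6.4}. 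Care is needed only in tracking the arithmetic of the exponents: $q/2+1/p$ and the like; I would double-check that the bookkeeping of the constant gives $2^{(3-p)/p}a^{2/p}\pi^{-n/2}q^{-n/2-1/p}$.

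For the explicit formula \eqref{Eq_6.5} I would apply Lemma~\ref{L_1}. Solving $2-\nu=\tfrac{p}{p-1}$ and $\mu+\nu=\tfrac{n-p+2}{p-1}$ gives $\nu=\tfrac{p-2}{p-1}$ and $\mu=\tfrac{n}{p-1}$. Here $\nu\in[0,1)\subset[0,2)$ precisely for $p\ge2$, and—this is the one genuinely new point compared with Proposition~\ref{P_4}—Lemma~\ref{L_1} also requires $\mu\ge0$, which is automatic, but the resulting integral $\int_{{\mathbb S}^{n-1}}\omega_{\kappa,\lambda}\,|(\bs e_\sigma,\bs e_n)|^{\mu+2}d\sigma$ should be finite and the manipulations legitimate; moreover $\lambda=\tfrac{(n-2)p+4}{2(p-1)}\ge0$ forces $p\le(n+4)/2$ when $n=2$ (and is automatic for $n\ge3$), which is exactly the stated range $2\le p\le(n+4)/2$. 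Once Lemma~\ref{L_1} applies, the maximum over $\bs z$ is attained at $\bs z=\bs e_n$, the integrand becomes $\omega_{\kappa,\lambda}\big((\bs e_\sigma,\bs e_n)\big)|(\bs e_\sigma,\bs e_n)|^{\mu+2}$ with $\mu+2=\tfrac{n+2(p-1)}{p-1}$; rewriting $\mu+2$ and using the Remark's identity \eqref{Eq_1.6A} to convert the sphere integral into $2\omega_{n-1}\int_0^{\pi/2}(\cdots)\cos^{(n+2)/(p-1)}\vartheta\sin^{n-2}\vartheta\,d\vartheta$ and inserting the definition \eqref{Eq_1.6AB} of $\omega_{\kappa,\lambda}$ gives \eqref{Eq_6.5}. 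Setting $p=2$ (so $q=2$, $\kappa=x_n^2/(2a^2t)$, $\lambda=n$, and the outer exponent becomes $1/2$) and simplifying the constant $k_{n,2}=2^{1/2}a/(\pi^{n/2}\cdot2^{n/2+1/2})$ together with $2\omega_{n-1}$ under the square root yields $b_n=a\big/\big(2^{(n-1)/2}\pi^{(n+1)/4}\sqrt{\Gamma((n-1)/2)}\big)$, which is \eqref{Eq_6.5A}.

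The main obstacle is not conceptual—the argument is structurally identical to Proposition~\ref{P_4}—but lies in the precise bookkeeping of exponents and constants through the two substitutions, and in verifying that the hypotheses of Lemma~\ref{L_1} ($0\le\nu<2$ and the implicit integrability) pin down exactly the interval $2\le p\le(n+4)/2$ rather than a larger or smaller range; in particular one must check that for $p$ in this range the constant $\lambda$ is nonnegative so that $\omega_{\kappa,\lambda}$ is covered by \eqref{Eq_1.6AB}, and that no endpoint degeneracy (e.g.\ at $p=\infty$, excluded here since then $\mu$ would need re-examination) is overlooked.
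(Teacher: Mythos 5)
Your overall plan is the paper's plan: differentiate the single layer potential, express the sharp constant as the $L^q$-norm of the vector kernel, substitute $s=q|x-y|^2/(4a^2(t-\tau))$ for the $\tau$-integral, pass to ${\mathbb S}^{n-1}$ via $dy'=x_n|y-x|^{-n}d\sigma$ and the identity $|y-x|\,|(\bs e_{xy},\bs e_n)|=x_n$, double the hemisphere to the full sphere, and then apply Lemma \ref{L_1}. But two arithmetic slips break the argument at exactly the point you yourself flag as the genuinely new one.

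First, the prefactor after differentiating \eqref{Eq_6.2} is $1/(4a^2\pi)^{n/2}$, not $2a^2/(4a^2\pi)^{n/2}$: the $2a^2$ in front of the potential cancels against the $1/(2a^2)$ produced by differentiating $e^{-|x-y|^2/(4a^2(t-\tau))}$. Carried through, your prefactor would give $k_{n,p}$ multiplied by a spurious $2a^2$, inconsistent with \eqref{Eq_6.4A}.

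Second, and more substantively, your value of $\mu$ is wrong. From $\nu=(p-2)/(p-1)$ and $\mu+\nu=(n-p+2)/(p-1)$ one gets
\[
\mu=\frac{n-p+2}{p-1}-\frac{p-2}{p-1}=\frac{n-2p+4}{p-1},
\]
not $n/(p-1)$. This is not a cosmetic slip: the hypothesis $\mu\ge 0$ of Lemma \ref{L_1} is then precisely $n-2p+4\ge 0$, i.e.\ $p\le (n+4)/2$, which is the stated range. With your $\mu=n/(p-1)$ the condition is vacuous, so you are forced to look elsewhere for the restriction on $p$, and the candidate you propose, $\lambda\ge 0$, does not produce it: at $n=2$ one has $\lambda=2/(p-1)>0$ for every $p>1$, so no upper bound on $p$ would emerge, and for general $n$ the threshold from $\lambda\ge 0$ is not $(n+4)/2$. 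Moreover your $\mu$ gives $\mu+2=(n+2p-2)/(p-1)$, yet a few lines later you write the exponent on $\cos\vartheta$ as $(n+2)/(p-1)$; these do not agree, and the transition is unexplained. With the correct $\mu$ one has $\mu+2=(n+2)/(p-1)$ directly, which is what feeds into \eqref{Eq_6.5} via \eqref{Eq_1.6A}. Once these two points are repaired the rest of the proposal goes through as the paper's proof does.
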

\begin{proof} (i) \textit{General case}. Differentiating in (\ref{Eq_6.2}) with respect to $x_j$, $j=1,\dots , n$, we obtain
$$
\nabla_x u(x, t)=-\frac{1}{\big ( 4a^2\pi \big )^{n/2}}\int_0^t\int_{{\mathbb R}^{n-1}}\frac{y-x}{(t-\tau )^{(n+2)/2}} 
e^{-\frac{|x-y|^2}{4a^2 (t-\tau)}}g(y', \tau)dy'd\tau ,
$$
which leads to
\begin{equation} \label{Eq_6.6}
\big ( \nabla_x u(x, t), \bs z \big )= -\frac{1}{\big ( 4a^2\pi \big )^{n/2}}\int_0^t\int_{{\mathbb R}^{n-1}}
\frac{|y-x|\big ( \bs e_{xy}, \bs z \big )}{(t-\tau )^{(n+2)/2}}e^{-\frac{|x-y|^2}{4a^2 (t-\tau)}}g(y', \tau)dy'd\tau ,
\end{equation}
where $\bs z$ is a unit $n$-dimensional vector and $\bs e_{xy}=(y-x)/|y-x|$. It follows from (\ref{Eq_6.6}) that
the sharp coefficient ${\mathcal N}_p(x, t)$ in inequality (\ref{Eq_6.3}) is given by
$$
{\mathcal N}_p(x, t)=\frac{1}{\big ( 4a^2\pi \big )^{n/2}}\max_{|\bs z|=1}\left \{ \int_0^t\int_{{\mathbb R}^{n-1}}
\frac{|y-x|^q |\big ( \bs e_{xy}, \bs z \big ) |^q}{(t-\tau )^{(n+2)q/2}} e^{-\frac{q|y-x|^2}{4a^2 (t-\tau)}}dy' d\tau \right \}^{1/q}. 
$$
Now, we write the last equality as
\begin{equation} \label{Eq_6.8}
{\mathcal N}_p(x, t)=\!\frac{1}{\big ( 4a^2\pi \big )^{n/2}}\max_{|\bs z|=1}\!\left \{\! 
\int_{{\mathbb R}^{n-1}}\!\!\!\frac{|y\!-\!x|^{q+n}|\big ( \bs e_{xy}, \bs z \big ) |^q}{x_n}\frac{x_n}{|y\!-\!x|^n}  dy'\!\!\!
\int_0^t \!\!\frac{e^{-\frac{q|y-x|^2}{4a^2 (t\!-\!\tau)}}}{(t\!-\!\tau )^{(n+2)q/2}} d\tau \!\right \}^{1/q}\!\!\!\!. 
\end{equation}
Putting
$$
s=\frac{q |x-y|^2}{4a^2(t-\tau)}\;,  
$$
we represent the inner integral on the right-hand side of (\ref{Eq_6.8}) in the form
\begin{equation} \label{Eq_6.9}
\int_0^t \frac{e^{-\frac{q|y-x|^2}{4a^2 (t-\tau)}}}{(t-\tau )^{(n+2)q/2}} d\tau= 
\left (\frac{4a^2}{q|y-x|^2} \right )^{\frac{(n+2)q}{2}-1}\int ^\infty _{\frac{q|y-x|^2}{4a^2 t}} s^{\frac{(n+2)q}{2}-2}e^{-s} ds.
\end{equation}
By (\ref{Eq_1.6AB}) and equality (\ref{Eq_4.10}), 
we write (\ref{Eq_6.9}) as follows
\begin{equation} \label{Eq_6.11}
\int_0^t \frac{e^{-\frac{q|y-x|^2}{4a^2 (t-\tau)}}}{(t-\tau )^{(n+2)q/2}} d\tau=
\left (\frac{4a^2 \big ( \bs e_{xy}, \bs e_n \big )^2}{qx_n^2} \right )^{\frac{(n+2)q}{2}-1}
\omega_{\kappa , \lambda } \big ( ( \bs e_{xy}, \bs e_n )\big ) ,
\end{equation}
where $\kappa$ and $\lambda $ are defined by (\ref{Eq_6.12A}).

Using (\ref{Eq_4.12}) and substituting (\ref{Eq_6.11}) into (\ref{Eq_6.8}), we arrive at the representation
\begin{equation} \label{Eq_6.13}
{\mathcal N}_p(x, t)=\!\frac{(4a^2)^{1/p}}{\pi^{n/2}q^{\frac{n}{2}\!+\!\frac{1}{p}}x_n^{\frac{n+1}{p}}}\max_{|\bs z|=1}\!
\left \{\! \int_{{\mathbb S}^{n-1}_-}\!\!\!\omega_{\kappa , \lambda } \big ( ( \bs e_{\sigma}, \bs e_n )\big )
|( \bs e_{\sigma}, \bs e_n )|^{\frac{n-p+2}{p-1}}|( \bs e_{\sigma}, \bs z )|^{\frac{p}{p\!-\!1}} d\sigma\right \}^{\frac{p\!-\!1}{p}}\!\!\!, 
\end{equation}
where ${\mathbb S}^{n-1}_- =\{ \sigma \in {\mathbb S}^{n-1}: (\bs e_\sigma , \bs e_n )<0 \}$.

In view of the evenness of the integrand in (\ref{Eq_6.13}) with respect to $\bs e_\sigma$, we obtain
\begin{equation} \label{Eq_6.14}
{\mathcal N}_p(x, t)=\!\frac{2^{(3-p)/p} a^{2/p}}{\pi^{n/2}q^{\frac{n}{2}\!+\!\frac{1}{p}}x_n^{\frac{n+1}{p}}}\max_{|\bs z|=1}\!
\left \{\! \int_{{\mathbb S}^{n-1}}\!\!\!\omega_{\kappa , \lambda} \big ( ( \bs e_{\sigma}, \bs e_n )\big )
|( \bs e_{\sigma}, \bs e_n )|^{\frac{n-p+2}{p-1}}|( \bs e_{\sigma}, \bs z )|^{\frac{p}{p\!-\!1}} d\sigma\right \}^{\frac{p\!-\!1}{p}}\!\!\!,
\end{equation}
which proves (\ref{Eq_6.4}).

(ii) \textit{The case $p\in [2, (n+4)/2]$}. Solving the system
$$
2-\nu=\frac{p}{p-1}\;,\;\;\;\;\;\;\mu+\nu=\frac{n-p+2}{p-1}
$$
with respect to $\nu$ and $\mu$, we obtain
$$
\nu=\frac{p-2}{p-1}\;,\;\;\;\;\;\;\mu=\frac{n-2p+4}{p-1}\;.
$$
Therefore, the conditions $0\leq \nu <2, \mu\geq 0$ hold for $p\in [2, (n+4)/2]$. Applying Lemma \ref{L_1} to
(\ref{Eq_6.14}), we arrive at
\begin{equation} \label{Eq_6.15}
{\mathcal N}_p(x,t)=\frac{k_{n,p}}{x_n^{\frac{n+1}{p}}}
\left \{\int_{{\mathbb S}^{n-1}}\omega_{\kappa , \lambda } \big ( ( \bs e_{\sigma}, \bs e_n )\big )
|( \bs e_{\sigma}, \bs e_n )|^{\frac{n+2}{p-1}} d\sigma\right \}^{\frac{p-1}{p}} , 
\end{equation}
where $p\in [2, (n+4)/2]$ and the constant $k_{n, p}$ is defined by (\ref{Eq_6.4A}). In view of (\ref{Eq_1.6A}) and
(\ref{Eq_1.6AB}), we write (\ref{Eq_6.15}) as (\ref{Eq_6.5}).
\end{proof}


\end{document}